\numberwithin{equation}{section}
\numberwithin{figure}{section}
\theoremstyle{plain}
\newtheorem{thm}{\protect\theoremname}
  \theoremstyle{definition}
  \newtheorem{defn}[thm]{\protect\definitionname}
  \theoremstyle{plain}
  \newtheorem{lem}[thm]{\protect\lemmaname}
  \theoremstyle{plain}
  \newtheorem{prop}[thm]{\protect\propositionname}
  \theoremstyle{remark}
  \newtheorem{rem}[thm]{\protect\remarkname}
  \providecommand{\definitionname}{Definition}
  \providecommand{\lemmaname}{Lemma}
  \providecommand{\propositionname}{Proposition}
  \providecommand{\remarkname}{Remark}
\providecommand{\theoremname}{Theorem}
\begin{document}

\title{Expected signature of Gaussian processes with strictly regular kernels }

\author{Horatio Boedihardjo}

\author{Anastasia Papavasiliou}

\author{Zhongmin Qian}
\begin{abstract}
We compute the expected signature of a class of Gaussian processes
which is a subclass of the Gaussian processes with regular kernels,
in the sense of \cite{AMN01}. 
\end{abstract}
\maketitle

\section{Introduction }

The expected signature of a stochastic process was first studied by
T. Fawcett in \cite{Faw03} and N. Victoire in \cite{LV04}, who independently
calculated the expected signature of Brownian motion. This is used
in \cite{LV04} to calculate the cubature measure which approximates
the Wiener measure. Since then, interests in the subject have grown.
In \cite{LH11}, Ni and Lyons expressed the expected signature of
Brownian motion in a disc up to the first exit time in terms of the
solution of a PDE. In \cite{Wer12}, the first three gradings of the
expected signature of the Chordal SLE measure were explicitly calculated.
In this article we calculate the expected signature of Gaussian processes
with strictly regular kernel (defined in next section). These are
Gaussian processes with regular kernels (see \cite{AMN01}) which
do not have Brownian components. 

The original motivation for undertaking this study was to find the
expected signature of fractional Brownian motions for $H>\frac{1}{2}$.
After posting this result, we were informed that the calculation of
expected signature for fractional Brownian motions for Hurst parameter
$H>\frac{1}{3}$ has already appeared in \cite{Fabric}. Therefore,
this article is a generalisation of the calculation in \cite{Fabric}.

\section{Main result }

We will first recall some notations.

Let $T>0$ be fixed throughout this note. 

Let $\triangle^{\prime}:=\left\{ \left(t,s\right)\in\mathbb{R}^{2}:0\leq s<t\leq T\right\} $.

Let $K\left(\cdot,\cdot\right):\triangle^{\prime}\rightarrow\mathbb{R}$
be a function such that:

(K1) For each $r$, $K\left(\cdot,r\right)$ is absolutely continuous
on $(r,T]$. 

(K2) $K\left(r^{+},r\right)=0\;\forall r\in\left[0,T\right]$.

(K3) Let $\left|K\right|\left((r,T],r\right)$ denote the total variation
of $K\left(\cdot,r\right)$ on $(r,T]$. Then
\[
\int_{0}^{\infty}\left|K\right|\left((r,T],r\right)^{2}\mathrm{d}r<\infty.
\]

If we remove the condition (K2) and weaken the absolute continuity
condition in (K1) to bounded total variation, then we recover the
notion of regular kernel in \cite{AMN01}. These extra conditions
mean that our Gaussian processes have to be strictly smoother than
Brownian motion. 

Let $\partial_{1}K$ denote the derivative of $K$ with respect to
its first coordinate. 

Let $W_{t}$ be a Gaussian process of the form 
\begin{equation}
W_{t}:=\int_{0}^{t}K\left(t,r\right)\mathrm{d}B_{r},\quad t\in\left[0,T\right],\label{eq:defn}
\end{equation}
where $\mathrm{d}B_{r}$ denotes the integration in the Itô's sense.

We shall call a Gaussian process $W$ of the form (\ref{eq:defn}),
where $K$ satisfies (K1), (K2) and (K3), a Gaussian process with
a strictly regular kernel $K$. 

A $d$-dimensional Gaussian process with a strictly regular kernel
is a process whose coordinate components are independent and identically
distributed copies of a Gaussian process with a strictly regular kernel. 

The expected signature of such process will be expressed in terms
of pairings, which we shall recall below: 
\begin{defn}
A set $\pi$ is a pairing of $\left\{ 1,2,\ldots,2n-1,2n\right\} $
if there exists two injective functions $a:\left\{ 1,\ldots,n\right\} \rightarrow\left\{ 1,\ldots,2n\right\} $
and $b:\left\{ 1,\dots,n\right\} \rightarrow\left\{ 1,\ldots,2n\right\} $
such that $a\left(i\right)<b\left(i\right)$ for all $i$ and 
\[
\pi=\left\{ \left(a\left(i\right),b\left(i\right)\right):i=1,..,n\right\} 
\]

\end{defn}
Let $E_{2n}$ be a subset of $\left\{ 1,2,\ldots,d\right\} ^{2n}$
such that $\left(i_{1},\ldots,i_{2n}\right)\in E_{2n}$ if and only
if for all $k\in\left\{ 1,2,\ldots,d\right\} $, the set 
\[
\left\{ j:i_{j}=k\right\} 
\]
has an even number of elements. 

Let $\Pi_{2n}$ denote the set of all possible pairings of $\left\{ 1,2,\ldots,2n\right\} $.
Given $\left(i_{1},\ldots,i_{2n}\right)\in\left\{ 1,2,\ldots,d\right\} ^{2n}$,
define $\Pi_{i_{1},\ldots,i_{2n}}$ to be a subset of $\Pi_{2n}$
whose elements $\pi$ satisfy 
\[
\left(l,j\right)\in\pi\implies i_{l}=i_{j}
\]
We can now state our main result: 
\begin{thm}
\label{thm:gen} Let $W$ be a $d$-dimensional Gaussian process with
a strictly regular kernel $K$. If $k=2n$ for some $n\in\mathbb{N}$,
and $\left(i_{1},\ldots,i_{2n}\right)\in E_{2n}$, then the projection
to the basis $e_{i_{1}}\otimes e_{i_{2}}...\otimes e_{i_{k}}$ of
the expected signature of $W$ up to time $T$ is 
\[
\sum_{\pi\in\Pi_{i_{1},\ldots,i_{2n}}}\int_{\triangle_{2n}\left(T\right)}\Pi_{\left(l,j\right)\in\pi}\left[\int_{0}^{u_{j}\wedge u_{l}}\left[\partial_{1}K\left(u_{j},r\right)\right]\left[\partial_{1}K\left(u_{l},r\right)\right]\mathrm{d}r\right]\mathrm{d}u_{1}..\mathrm{d}u_{n}
\]
where $\triangle_{2n}\left(T\right)$ denotes the simplex $\left\{ \left(u_{1},\ldots,u_{2n}\right)\in\mathbb{R}^{2n}:0\leq u_{1}<\ldots<u_{2n}\leq T\right\} $.
The projection to the basis $e_{i_{1}}\otimes e_{i_{2}}\ldots\otimes e_{i_{k}}$
is zero otherwise.
\end{thm}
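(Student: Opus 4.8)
The plan is to compute the expected signature by expanding the iterated It\^o integrals and using the independence of the coordinate components together with Wick's theorem (the moment formula for centered Gaussian vectors). Write the $k$-th level of the signature of $W$ as
\[
S^{(k)}_{i_1,\ldots,i_k}=\int_{\triangle_k(T)}\mathrm{d}W^{i_1}_{u_1}\cdots\mathrm{d}W^{i_k}_{u_k},
\]
interpreting the integration in the Stratonovich (geometric rough path) sense; since $K$ is strictly smoother than Brownian motion, $W$ has finite $p$-variation for some $p<2$, so there are no It\^o correction terms and the iterated integrals are honest Young/rough integrals. The first reduction is to substitute the Volterra representation $W^i_{u}=\int_0^{u}\partial_1 K(u,r)\,\mathrm{d}B^i_r$ (after integrating by parts in (K1)--(K2), so that $\mathrm dW^i_u$ has a clean ``density'' against $\mathrm dB^i$); this is legitimate because the regularity of $K$ lets one interchange the order of integration (stochastic Fubini).

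The second step is the probabilistic core: each $S^{(k)}_{i_1,\ldots,i_k}$ is (a limit of) polynomials in the jointly Gaussian family $\{B^i_r\}$, so $\mathbb E[S^{(k)}_{i_1,\ldots,i_k}]$ is obtained by Wick's theorem --- it is a sum over pairings of the indices $\{1,\ldots,k\}$ of products of covariances. Because the coordinates $B^1,\ldots,B^d$ are independent, a pairing contributes zero unless it only pairs indices $l,j$ with $i_l=i_j$; hence the sum is over $\pi\in\Pi_{i_1,\ldots,i_k}$, and in particular the whole expression vanishes unless every letter occurs an even number of times, i.e. unless $(i_1,\ldots,i_k)\in E_k$, forcing $k=2n$. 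For a paired $(l,j)$ the relevant covariance is $\mathbb E[\mathrm dB^{i}_{r}\,\mathrm dB^{i}_{r'}]=\delta(r-r')\,\mathrm dr$, and pushing this through the Volterra kernels gives exactly the ``edge weight''
\[
\int_0^{u_j\wedge u_l}\partial_1K(u_j,r)\,\partial_1K(u_l,r)\,\mathrm dr,
\]
which is the covariance $\mathbb E[W^{i}_{u_j}W^{i}_{u_l}]$-type quantity appearing in the statement. Collecting the factors over the edges of $\pi$ and integrating over the simplex $\triangle_{2n}(T)$ yields the claimed formula.

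The main obstacle is making the Wick/Fubini manipulations rigorous at the level of iterated integrals rather than for a single Gaussian variable: one must justify (i) that the Stratonovich iterated integral of $W$ exists and equals the rough-path lift (no area/It\^o corrections), using (K1)--(K3) to get $p$-variation control with $p<2$ and a Kolmogorov-type estimate on $\mathbb E|W_t-W_s|^2$ coming from $\int_0^\infty |K|((r,T],r)^2\mathrm dr<\infty$; (ii) that expectation commutes with the iterated-integral limit, e.g. by an $L^2$ bound on the partial sums uniform in the mesh, again via Wick's theorem applied to the discretized sums; and (iii) the stochastic Fubini step needed to replace increments of $W$ by $\partial_1K\,\mathrm dB$. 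I would handle this by first proving the identity for a dense class of smooth approximating kernels (where everything is a classical Riemann--Stieltjes computation and Wick's theorem is elementary), and then passing to the limit in $K$ using the continuity of the signature map in $p$-variation topology together with the $L^2$ bounds above. A secondary bookkeeping point is checking that the combinatorial factor is exactly $1$ per pairing --- i.e. that after the simplex ordering $u_1<\cdots<u_{2n}$ is fixed, each $\pi\in\Pi_{i_1,\ldots,i_{2n}}$ contributes its product of edge weights with no extra multiplicity --- which follows because Wick's theorem already enumerates ordered pairings without overcounting.
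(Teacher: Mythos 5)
Your key idea --- reduce to Wick's formula applied to a Gaussian discretisation and then pass to the limit --- is the same as the paper's, but one of your supporting claims is false and the two steps you defer are exactly where the real work lies. Conditions (K1)--(K3) do \emph{not} imply that $W$ has finite $p$-variation for some $p<2$: they only give $f\in L^{1}\left(\left[0,T\right]^{2}\right)$, hence finite $1$-variation of the two-dimensional covariance. The class contains kernels such as $K\left(t,r\right)=\left(\log\frac{1}{t-r}\right)^{-1}$ (suitably cut off), for which $\mathbb{E}\left[\left(W_{t}-W_{s}\right)^{2}\right]$ decays like $\left(t-s\right)$ up to logarithmic corrections, so no H\"older or variation exponent better than Brownian motion's is available and Young integration already fails at level two. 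Your iterated integrals therefore cannot in general be defined as Young integrals with ``no It\^o corrections''; the paper instead builds the lift from the finite $\rho$-variation ($\rho=1$) of the covariance via Theorem \ref{thm:aproxlem}, whose quantitative $L^{r}$ bounds are also what justify exchanging expectation with the limit of piecewise-linear approximations (Proposition \ref{prop:limit}). The pathwise continuity of the signature map that you invoke for your ``pass to the limit in $K$'' step does not by itself give convergence of \emph{expected} signatures; you would still need uniform integrability of the signature coefficients over your family of smoothed kernels, and your sketch does not explain how smoothing $K$ provides that.

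Second, once Wick's formula is applied to the discretised sums one does not land directly on the simplex integral: the resulting expression is a sum over weakly increasing multi-indices $1\le k_{1}\le\cdots\le k_{2n}\le2^{m}$ with multinomial weights, and the contributions in which two consecutive indices coincide (the ``diagonal'' terms) must be shown to vanish as the mesh tends to zero. This is Lemma \ref{lem:3 dia} in the paper; it uses $f\in L^{1}$ together with the fact that the union of the diagonal squares has vanishing Lebesgue measure, and it is the main estimate of the whole computation. Your remark about the combinatorial factor being $1$ per pairing only addresses the strictly increasing (off-diagonal) multi-indices, so this step is genuinely missing. With these two pieces supplied, your outline essentially becomes the paper's proof.
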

The key idea of the calculation, inspired by \cite{CQ00}, is to first
calculate the expected signature of the piecewise linear approximation
of the Gaussian process. This reduces the problem into computing joint
moments of Gaussian random variables, which can be done using Wick's
formula. 

In section 3, we shall recall the properties of Gaussian rough paths. 

In section 4, we shall prove the main result Theorem \ref{thm:gen}.

In section 5, we show that our formula coincide with the formula for
the fractional Brownian motion case in \cite{Fabric}.

In section 6, we show that the expected signature is right continuous
at $H=\frac{1}{2}$.

\thanks{We would like to thank Xi Geng for useful discussions.}

\section{Signature of Gaussian process }

\subsection{Gaussan process }

Recall that $\triangle^{\prime}:=\left\{ \left(t,s\right)\in\mathbb{R}^{2}:0\leq s<t\leq T\right\} $. 

Let $K\left(\cdot,\cdot\right):\triangle^{\prime}\rightarrow\mathbb{R}$
be a function satisfying the conditions (K1), (K2) and (K3) in Section
1.1, and $W:\left[0,T\right]\rightarrow\mathbb{R}$ be a Gaussian
process satisfying (\ref{eq:defn}). 

Then $W_{t}$ is a Gaussian process with the covariance function 
\begin{equation}
R\left(t,s\right):=\int_{0}^{t\wedge s}K\left(t,r\right)K\left(s,r\right)\mathrm{d}r\label{eq:kernel1}
\end{equation}

The integral in (\ref{eq:kernel1}) exists by condition (K3).

By (K1) and (K2), we have $\partial_{1}K\left(\cdot,r\right)\in L^{1}(r,T]$
and 
\[
K\left(t,u\right)=\int_{r}^{t}\partial_{1}K\left(u,r\right)\mathrm{d}u.
\]

Thus (\ref{eq:kernel1}) becomes 
\[
R\left(t,s\right)=\int_{0}^{t\wedge s}\int_{r}^{s}\int_{r}^{t}\partial_{1}K\left(u,r\right)\partial_{1}K\left(v,r\right)\mathrm{d}u\mathrm{d}v\mathrm{d}r.
\]

By Tonelli's theorem, we have the following equality: 
\begin{equation}
R\left(t,s\right)=\int_{0}^{s}\int_{0}^{t}\left[\int_{0}^{u\wedge v}\left[\partial_{1}K\left(u,r\right)\right]\left[\partial_{1}K\left(v,r\right)\right]\mathrm{d}r\right]\mathrm{d}u\mathrm{d}v.\label{eq:kernel2}
\end{equation}

We shall denote the function $\int_{0}^{u\wedge v}\left[\partial_{1}K\left(u,r\right)\right]\left[\partial_{1}K\left(v,r\right)\right]\mathrm{d}r$
by $f\left(u,v\right)$. 

Note in particular that $f\left(\cdot,\cdot\right)\in L^{1}\left(\left[0,T\right]\times\left[0,T\right]\right)$
by Tonelli's theorem and (K3). 

To summarise, we have 
\[
R\left(t,s\right)=\int_{0}^{t}\int_{0}^{s}f\left(u,v\right)\mathrm{d}u\mathrm{d}v
\]
where $f\in L^{1}\left(\left[0,T\right]\times\left[0,T\right]\right)$. 

This means that for $\sigma\leq\tau$ and $s\leq t$ in $\left[0,T\right]$,
we have 
\[
\mathbb{E}\left[\left(W_{t}-W_{s}\right)\left(W_{\tau}-W_{\sigma}\right)\right]=\int_{\sigma}^{\tau}\int_{s}^{t}f\left(u,v\right)\mathrm{d}u\mathrm{d}v.
\]

This expression will be key to our computation. 

By the definition of $f$, we also have $f\left(u,v\right)=f\left(v,u\right)$. 

We summarise our calculations in the following lemma:
\begin{lem}
\label{lem:fund}Let $W:\left[0,T\right]\rightarrow\mathbb{R}$ be
a Gaussian process with a strictly regular kernel $K$. Then there
exists an integrable function $f:\left[0,T\right]^{2}\rightarrow\mathbb{R}$
such that $f\left(u,v\right)=f\left(v,u\right)$, and 
\[
\mathbb{E}\left[\left(W_{t}-W_{s}\right)\left(W_{\tau}-W_{\sigma}\right)\right]=\int_{\sigma}^{\tau}\int_{s}^{t}f\left(u,v\right)\mathrm{d}u\mathrm{d}v.
\]

\end{lem}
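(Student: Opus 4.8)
The plan is to take for $f$ precisely the function isolated in the discussion above, namely
\[
f(u,v):=\int_{0}^{u\wedge v}\bigl[\partial_{1}K(u,r)\bigr]\bigl[\partial_{1}K(v,r)\bigr]\,\mathrm{d}r ,
\]
with the convention $\partial_{1}K(u,r)=0$ for $u\le r$, and to check the three assertions in turn: integrability of $f$ on $[0,T]^{2}$, symmetry, and the increment identity. The symmetry $f(u,v)=f(v,u)$ is immediate, since both the integrand and the upper limit $u\wedge v$ are symmetric in the pair $(u,v)$. The one elementary fact used repeatedly is that, for each fixed $r$, absolute continuity of $K(\cdot,r)$ on $(r,T]$ from (K1) together with $K(r^{+},r)=0$ from (K2) yields $K(t,r)=\int_{r}^{t}\partial_{1}K(u,r)\,\mathrm{d}u$ for $t\in(r,T]$, and in particular $\int_{r}^{T}|\partial_{1}K(u,r)|\,\mathrm{d}u=\left|K\right|\left((r,T],r\right)$.

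For integrability I would apply Tonelli's theorem to the nonnegative function $(u,v,r)\mapsto|\partial_{1}K(u,r)|\,|\partial_{1}K(v,r)|$, integrating out $(u,v)$ for fixed $r$:
\[
\int_{0}^{T}\!\int_{0}^{T}|f(u,v)|\,\mathrm{d}u\,\mathrm{d}v\;\le\;\int_{0}^{T}\Bigl(\int_{r}^{T}|\partial_{1}K(u,r)|\,\mathrm{d}u\Bigr)^{2}\mathrm{d}r\;=\;\int_{0}^{T}\left|K\right|\left((r,T],r\right)^{2}\,\mathrm{d}r ,
\]
which is finite by (K3). This same estimate is what licenses every interchange of the order of integration below.

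For the increment identity, substituting $K(t,r)=\int_{r}^{t}\partial_{1}K(u,r)\,\mathrm{d}u$ and $K(s,r)=\int_{r}^{s}\partial_{1}K(v,r)\,\mathrm{d}v$ into the covariance formula (\ref{eq:kernel1}) and applying Fubini's theorem (legitimate by the bound just obtained) gives $R(t,s)=\int_{0}^{t}\int_{0}^{s}f(u,v)\,\mathrm{d}u\,\mathrm{d}v$, that is (\ref{eq:kernel2}); since $f$ is symmetric, this expression is symmetric in $(t,s)$, so it in fact equals $\mathbb{E}[W_{a}W_{b}]$ for all $a,b\in[0,T]$, not merely on $\triangle^{\prime}$. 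As $W$ is a centred Gaussian process and the It\^{o} integral in (\ref{eq:defn}) is linear in $B$, covariance is bilinear in increments, whence
\[
\mathbb{E}\bigl[(W_{t}-W_{s})(W_{\tau}-W_{\sigma})\bigr]=R(t,\tau)-R(t,\sigma)-R(s,\tau)+R(s,\sigma) .
\]
Writing each of the four terms as $\int_{0}^{\cdot}\int_{0}^{\cdot}f$ over a rectangle with a corner at the origin and cancelling the overlapping rectangles (using $s\le t$ and $\sigma\le\tau$) collapses the right-hand side to $\int_{s}^{t}\int_{\sigma}^{\tau}f(u,v)\,\mathrm{d}u\,\mathrm{d}v=\int_{\sigma}^{\tau}\int_{s}^{t}f(u,v)\,\mathrm{d}u\,\mathrm{d}v$, which is the claim.

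The only genuinely delicate point is the interchange of the order of integration, and the display above makes transparent why the hypothesis is phrased in terms of $L^{2}$ of the total variation: it is exactly the integrability of $r\mapsto\left|K\right|\left((r,T],r\right)^{2}$ that renders the relevant triple integral absolutely convergent, whereas a mere $L^{1}$ bound would not suffice. Everything else is bookkeeping with Tonelli and Fubini. I would also remark, for completeness, that the It\^{o} integral defining $W$ in (\ref{eq:defn}) is itself well defined by the same mechanism, since $|K(t,r)|\le\left|K\right|\left((r,T],r\right)$ forces $K(t,\cdot)\in L^{2}[0,t]$.
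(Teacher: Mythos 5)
Your proposal is correct and follows essentially the same route as the paper, which establishes this lemma through the computation in Section 3.1: writing $K(t,r)=\int_{r}^{t}\partial_{1}K(u,r)\,\mathrm{d}u$ via (K1)--(K2), applying Tonelli to obtain $R(t,s)=\int_{0}^{t}\int_{0}^{s}f(u,v)\,\mathrm{d}u\,\mathrm{d}v$ with the same $f$, and deducing integrability of $f$ from (K3). Your version is somewhat more explicit than the paper's on two points -- the identification $\int_{r}^{T}|\partial_{1}K(u,r)|\,\mathrm{d}u=\left|K\right|\left((r,T],r\right)$ that makes the role of the $L^{2}$ hypothesis transparent, and the four-term rectangle cancellation yielding the increment identity -- but these are elaborations of, not departures from, the paper's argument.
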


\subsection{Geometric rough paths}

Let $T^{n}\left(\mathbb{R}^{d}\right)$ and $T\left(\mathbb{R}^{d}\right)$
denote the graded algebras on $\mathbb{R}^{d}$ defined by
\[
T^{n}\left(\mathbb{R}^{d}\right):=\oplus_{k=0}^{n}\left(\mathbb{R}^{d}\right)^{\otimes k}
\]
and 
\[
T\left(\mathbb{R}^{d}\right):=\oplus_{k=0}^{\infty}\left(\mathbb{R}^{d}\right)^{\otimes k}
\]
where $\left(\mathbb{R}^{d}\right)^{\otimes0}:=\mathbb{R}$. 

We shall define three projection maps as follow: 
\begin{enumerate}
\item $\pi_{n}$ will denote the projection map from $T\left(\mathbb{R}^{d}\right)$
to $\left(\mathbb{R}^{d}\right)^{\otimes n}$.
\item If $I=\left(i_{1},..,i_{n}\right)\in\mathbb{N}^{n}$, then $\pi_{I}$
denote the projection map onto the basis $e_{i_{1}}\otimes\cdots\otimes e_{i_{n}}$
.
\item $\pi^{\left(n\right)}$ will denote the projection of an element of
$T\left(\mathbb{R}^{d}\right)$ onto $T^{n}\left(\mathbb{R}^{d}\right)$.
\end{enumerate}
We equip $\left(\mathbb{R}^{d}\right)^{\otimes k}$ with a metric
by identifying $\left(\mathbb{R}^{d}\right)^{\otimes k}$ with $\mathbb{R}^{d^{k}}$,
and we equip $T^{n}\left(\mathbb{R}^{d}\right)$ with the metric
\[
\left|\mathbf{w}\right|_{T^{n}\left(\mathbb{R}^{d}\right)}:=\max_{1\leq k\leq n}\left|\pi_{k}\left(\mathbf{w}\right)\right|
\]

Let $p\geq1$ and let $\mathcal{V}^{p}\left(\mathbb{R}^{d}\right)$
denote the set of all continuous functions $f:\left[0,T\right]\rightarrow\mathbb{R}^{d}$
with finite $p$-variation, i.e. 
\begin{equation}
\left\Vert f\right\Vert _{p}^{p}:=\sup_{\mathcal{P}}\sum_{k}\left|f\left(t_{k+1}\right)-f\left(t_{k}\right)\right|^{p}<\infty\label{eq:variation}
\end{equation}
where the supremum is taken over all finite partitions $\mathcal{P}:=\left(t_{0},t_{1},..,t_{n-1},t_{n}\right)$,
with $0=t_{0}<t_{1}<\cdots<t_{n-1}<t_{n}=T$.

We will now define the lift of functions in $\mathcal{V}^{1}$ .
\begin{defn}
Let $\gamma\in\mathcal{V}^{1}\left(\mathbb{R}^{d}\right)$and let
$\triangle_{n}\left(s,t\right):=\left\{ \left(t_{1},\ldots,t_{n}\right):s<t_{1}<\cdots<t_{n}<t\right\} $.
The \textit{lift} of $\gamma$ is a function $S\left(\gamma\right):\left\{ \left(s,t\right):0\leq s\leq t\right\} \rightarrow T\left(\mathbb{R}^{d}\right)$
defined by 
\begin{equation}
S\left(\gamma\right)_{s,t}=1+\sum_{n=1}^{\infty}\int_{\triangle_{n}\left(s,t\right)}\mathrm{d}\gamma_{t_{1}}\otimes\ldots\otimes\mathrm{d}\gamma_{t_{n}}\label{eq:signature definition}
\end{equation}
where the sum $+$ is the direct sum operation in $T\left(\mathbb{R}^{d}\right)$
and the integrals are taken in the Lebesgue-Stieltjes sense. 

The signature of a path $\gamma\in\mathcal{V}^{1}\left(\mathbb{R}^{d}\right)$
is defined to be $S\left(\gamma\right)_{0,1}$.
\end{defn}
Note in particular that $\pi_{n}\left(S\left(\gamma\right)_{s,t}\right)=\int_{\triangle_{n}\left(s,t\right)}\mathrm{d}\gamma_{t_{1}}\otimes\ldots\otimes\mathrm{d}\gamma_{t_{n}}$
and will be called the $n$-th grading of the lift of $\gamma$. We
will denote $\pi^{\left(n\right)}\left(S\left(\gamma\right)_{s,t}\right)$
by $S_{n}\left(\gamma\right)_{s,t}$.

The signature of a path satisfies the Chen's identity: 
\[
S\left(\gamma\right)_{s,u}\otimes S\left(\gamma\right)_{u,t}=S\left(\gamma\right)_{s,t}\;\forall\;0\leq s\leq u\leq t\leq T
\]

For paths which do not have finite variation, such as sample paths
of Brownian motion, the signatures have to be defined using geometric
rough paths. They are constructed using the following metric. Let
$\triangle:=\left\{ \left(s,t\right):0\leq s\leq t\leq T\right\} $.
\begin{defn}
\label{p var}Let $n\in\mathbb{N}$ and $p\geq1$. Let $\mathcal{V}^{p}\left(T^{n}\left(\mathbb{R}^{d}\right)\right)$
denote the set of all continuous functions $w$ from $\triangle$
to $T^{n}\left(\mathbb{R}^{d}\right)$ such that

1.$\pi_{0}\left(w_{s,t}\right)\equiv1$.

2. $w$ satisfies 

\[
\max_{1\leq k\leq n}\sup_{D}\left(\sum_{l}\left|\pi_{k}\left(w_{t_{l-1},t_{l}}\right)\right|^{\frac{p}{k}}\right)^{\frac{k}{p}}<\infty
\]
where $\sup_{D}$ runs over all partitions $t_{0}=0<t_{1}<t_{2}<..<t_{n}=T$.

Let $w^{1},w^{2}$ be elements of $\mathcal{V}^{p}\left(T^{n}\left(\mathbb{R}^{d}\right)\right)$.
We define, for each $p\geq1$, a distance function between $w^{1}$
and $w^{2}$ by
\[
\rho_{p-var}^{\left(n\right)}\left(w^{1},w^{2}\right)=\max_{1\leq k\leq n}\sup_{D}\left(\sum_{l}\left|\pi_{k}\left(w_{t_{l-1},t_{l}}^{1}\right)-\pi_{k}\left(w_{t_{l-1},t_{l}}^{2}\right)\right|^{\frac{p}{k}}\right)^{\frac{k}{p}}
\]
where $\sup_{D}$ runs over all partitions $t_{0}=0<t_{1}<t_{2}<..<t_{n}=T$.

For $p\geq1$, let $\left\lfloor p\right\rfloor $ be the integer
part of $p$. The metric $\rho_{p-var}^{\left(\left\lfloor p\right\rfloor \right)}\left(\cdot,\cdot\right)$
defined on $\mathcal{V}^{p}\left(T^{\left\lfloor p\right\rfloor }\left(\mathbb{R}^{d}\right)\right)$
is known as the \textit{p-variation metric }and will be denoted by
$d_{p}\left(\cdot,\cdot\right)$. 
\end{defn}
Let $C^{p-var}\left(\left[0,T\right],T^{n}\left(\mathbb{R}^{d}\right)\right)$
denote the set of all continuous function $w$ from $\triangle$ to
$T^{n}\left(\mathbb{R}^{d}\right)$ such that $d_{p}\left(w,0\right)<\infty$. 

Let $L\left(T^{n}\left(\mathbb{R}^{d}\right)\right)$ denote the set
$\left\{ \left(s,t\right)\rightarrow\pi^{\left(n\right)}\left(S\left(w\right)_{s,t}\right):w\in\mathcal{V}^{1}\left(\mathbb{R}^{d}\right)\right\} $. 

Let $G\Omega_{p}\left(\mathbb{R}^{d}\right)$ be the completion of
the set $L\left(T^{\left\lfloor p\right\rfloor }\left(\mathbb{R}^{d}\right)\right)$
under the $p$-variation metric in $\mathcal{V}^{p}\left(T^{\left\lfloor p\right\rfloor }\left(\mathbb{R}^{d}\right)\right)$.
$G\Omega_{p}\left(\mathbb{R}^{d}\right)$ is called the space of \textit{$p-$geometric
rough paths. }

This means that a continuous function $w:\left[0,T\right]\rightarrow T^{\left\lfloor p\right\rfloor }\left(\mathbb{R}^{d}\right)$
lies in $G\Omega_{p}\left(\mathbb{R}^{d}\right)$ if we can approximate
$w$ in the $p$ -variation metric by signatures of paths which have
finite variations. One candidate of such approximation is the piecewise
linear approximation, defined as below: 
\begin{defn}
\label{interpol}Let $w:\left[0,T\right]\rightarrow\mathbb{R}^{d}$
be a continuous function. 

Let $D=\left(0=t_{0}<t_{1}<..<t_{n}=T\right)$ be a partition of $\left[0,T\right]$.
We define the piecewise linear interpolation of $w$ with respect
to the partition $\mathcal{D}$ by 
\[
w^{D}\left(t\right):=w\left(t_{i}\right)+\frac{w\left(t_{i+1}\right)-w\left(t_{i}\right)}{t_{i+1}-t_{i}}\left(t-t_{i}\right)\; t\in\left[t_{i},t_{i+1}\right].
\]

\end{defn}
One final fact we need about geometric rough paths is that:
\begin{thm}
\label{thm:extension}(\cite{Lyn98}) Let $p\geq1$. Let $w\in\mathcal{V}^{p}\left(T^{\left\lfloor p\right\rfloor }\left(\mathbb{R}^{d}\right)\right)$
and that $w$ is multiplicative, in the sense that 
\[
w_{s,u}\otimes w_{u,t}=w_{s,t}
\]
for all $0\leq s\leq u\leq t$. Then for all $N\geq\left\lfloor p\right\rfloor $,
there exists a unique multiplicative functional $S_{N}\left(w\right)\in\mathcal{V}^{p}\left(T^{n}\left(\mathbb{R}^{d}\right)\right)$
such that 
\[
\pi^{\left(\left\lfloor p\right\rfloor \right)}\left(S_{N}\left(w\right)\right)=w
\]

\end{thm}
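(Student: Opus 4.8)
The plan is to argue by induction on the level $N$, building the extension one tensor degree at a time. Write $m=\lfloor p\rfloor$. The base case $N=m$ is exactly the hypothesis, since $\pi^{(m)}(S_m(w))=w$ forces $S_m(w)=w$, and this is trivially the unique extension. For the inductive step I would suppose $w$ has already been extended to a unique multiplicative $V\in\mathcal{V}^p(T^{N-1}(\mathbb{R}^d))$ with $\pi^{(m)}(V)=w$, where $N>m$, and extend $V$ by one level into $T^N(\mathbb{R}^d)$. The central device is a control function: from finite $p$-variation of $V$ one extracts a continuous superadditive $\omega:\triangle\to[0,\infty)$ vanishing on the diagonal, with $\omega(s,u)+\omega(u,t)\le\omega(s,t)$ and $|\pi_k(V_{s,t})|\le\omega(s,t)^{k/p}$ for $1\le k\le N-1$. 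For existence I would write down the natural candidate for the top level by appending zero, $\hat V_{s,t}:=(V_{s,t},0)\in T^N(\mathbb{R}^d)$. Multiplicativity of $V$ makes $\hat V$ multiplicative at every level below $N$, and expanding $\pi_N(\hat V_{s,u}\otimes\hat V_{u,t}-\hat V_{s,t})=\sum_{j=1}^{N-1}\pi_j(V_{s,u})\otimes\pi_{N-j}(V_{u,t})$ gives, via the control estimate, a bound $C\,\omega(s,t)^{N/p}$. Because $N>m$ forces the exponent $\theta:=N/p>1$, the functional $\hat V$ is \emph{almost multiplicative}: its multiplicative defect is of super-linear order in $\omega$.

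The existence argument is then the refinement (``sewing'') construction. For a partition $D=(s=\tau_0<\cdots<\tau_r=t)$ I set $V^D_{s,t}:=\hat V_{\tau_0,\tau_1}\otimes\cdots\otimes\hat V_{\tau_{r-1},\tau_r}$ and show this net converges as the mesh shrinks. The key is a ``drop a point'' estimate: by superadditivity and pigeonhole there is always an interior $\tau_i$ with $\omega(\tau_{i-1},\tau_{i+1})\le\frac{2}{r-1}\omega(s,t)$, and deleting it alters the product only through the defect at that junction, which by almost-multiplicativity is at most $C\,\omega(\tau_{i-1},\tau_{i+1})^\theta\le C(\tfrac{2}{r-1})^\theta\omega(s,t)^\theta$. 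Summing these increments as one removes points one at a time, and using $\sum_r(r-1)^{-\theta}<\infty$ (valid exactly because $\theta>1$), shows the products form a Cauchy net. I define $\pi_N(V^\ast_{s,t})$ to be the limit and set $V^\ast:=(V,\pi_N(V^\ast))$.

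It remains to check that $V^\ast$ has the required properties and that the extension is unique. Multiplicativity of $V^\ast$ follows since splitting a fine partition of $[s,t]$ at $u$ factors the product; the level-$N$ bound $|\pi_N(V^\ast_{s,t})|\le C\,\omega(s,t)^{N/p}$ gives $V^\ast\in\mathcal{V}^p(T^N(\mathbb{R}^d))$; and the levels below $N$ are unchanged because the degree $\le N-1$ part of every $V^D_{s,t}$ already equals $V_{s,t}$ by multiplicativity of $V$, so $\pi^{(m)}(V^\ast)=w$. For uniqueness, suppose $V^\sharp$ is another multiplicative extension agreeing with $V$ up to level $N-1$, and set $\phi_{s,t}:=\pi_N(V^\ast_{s,t})-\pi_N(V^\sharp_{s,t})$. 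Expanding multiplicativity at level $N$ for both functionals, the cross terms $\sum_{j=1}^{N-1}\pi_j\otimes\pi_{N-j}$ coincide since they only see the agreeing lower levels, whence $\phi_{s,t}=\phi_{s,u}+\phi_{u,t}$; that is, $\phi$ is \emph{additive}. As both extensions lie in $\mathcal{V}^p$, the increment $\phi$ has finite $p/N$-variation with $p/N<1$. A continuous additive functional of finite $q$-variation for $q<1$ must vanish: on any partition the sum $\sum_l|\phi_{\tau_{l-1},\tau_l}|$ is dominated by $(\mathrm{mesh})^{1-q}$ times the $q$-variation and tends to $0$, while additivity forces that same sum to equal the fixed $|\phi_{s,t}|$. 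Hence $\phi\equiv0$, the extension is unique at level $N$, and the induction produces $S_N(w)$ for every $N\ge\lfloor p\rfloor$.

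The step I expect to be the genuine obstacle is the convergence of the products $V^D$: making the pigeonhole/superadditivity bookkeeping precise, tracking how the successive defects telescope as points are deleted, and confirming the limit is independent of the approximating partitions. This is the analytic heart of the theorem; once $\omega$ and the almost-multiplicativity bound are in hand, the multiplicativity, regularity, and uniqueness checks are comparatively mechanical.
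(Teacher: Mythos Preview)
Your proposal is the classical Lyons extension argument and is correct in outline. However, the paper does not provide its own proof of this theorem: it is stated with a citation to \cite{Lyn98} and used as a background fact, so there is no in-paper proof to compare against. What you have written is essentially the original proof from \cite{Lyn98} (induction on the level, almost-multiplicativity of the zero-extension controlled by $\omega^{N/p}$ with $N/p>1$, the maximal-inequality/point-dropping construction to build the limit, and uniqueness via additivity of the difference combined with $p/N<1$), so your approach coincides with the source the paper defers to.
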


\subsection{Signature of Gaussian processes }

Recall that $\triangle$ denotes $\left\{ \left(s,t\right):0\leq s\leq t\le T\right\} $.
Let $f:\triangle\times\triangle\rightarrow\mathbb{R}$ be a function.
We shall follow \cite{FV10} and use the notation $f\left(\begin{array}{c}
s,t\\
u,v
\end{array}\right)$ to denote, for $s\leq t$ and $u\leq v$, 
\[
f\left(\begin{array}{c}
s,t\\
u,v
\end{array}\right):=f\left(s,u\right)+f\left(t,v\right)-f\left(s,v\right)-f\left(t,u\right)
\]
and a function $f:\left[0,T\right]^{2}\rightarrow\mathbb{R}$ is said
to have finite $\rho$-variation if $\left|f\right|_{\rho-var;\left[0,T\right]^{2}}<\infty$,
where 
\begin{equation}
\left|f\right|_{p-var;\left[s,t\right]\times\left[u,v\right]}=\sup_{\begin{array}{c}
\left(t_{i}\right)\in\mathcal{D}\left(\left[s,t\right]\right)\\
\left(t_{i}^{\prime}\right)\in\mathcal{D}\left(\left[u,v\right]\right)
\end{array}}\left(\sum_{i,j}\left|f\left(\begin{array}{c}
t_{i},t_{i+1}\\
t_{j}^{\prime},t_{j+1}^{\prime}
\end{array}\right)\right|^{p}\right)^{\frac{1}{p}}\label{eq:pvar}
\end{equation}
and $\mathcal{D}\left(\left[a,b\right]\right)$ be the set of all
partitions of the interval $\left[a,b\right]$. 

For a function $\omega:\triangle\times\triangle\rightarrow[0,\infty)$,
we shall denote, for $\left[s,t\right]\times\left[u,v\right]\in\left[0,T\right]^{2}$,
\[
\omega\left(\left[s,t\right]\times\left[u,v\right]\right):=\omega\left(s,t,u,v\right)
\]

\begin{defn}
A 2D \textit{control} is a continuous function $\omega:\triangle\times\triangle\rightarrow[0,\infty)$
such that for all rectangles $R_{1}$,$R_{2}$,$R$ in $\left[0,T\right]\times\left[0,T\right]$,
such that if $R_{1}\cup R_{2}\subset R$ , $R_{1}\cap R_{2}=\emptyset$,
then 
\[
\omega\left(R_{1}\right)+\omega\left(R_{2}\right)\leq\omega\left(R\right)
\]
and that for all rectangles $R$ with zero Lebesgue measure in $\mathbb{R}^{2}$
, 
\[
\omega\left(R\right)=0
\]

\end{defn}

\begin{defn}
Let $\omega:\triangle\times\triangle\rightarrow[0,\infty)$ be a 2D
control and let $f:\triangle\times\triangle\rightarrow\mathbb{R}$
be a continuous function. We say that $\omega$ \textit{controls the
$p$-variation of $f$} if for all $\left[s,t\right]\times\left[u,v\right]\subset\left[0,T\right]^{2}$,
\[
\left|f\left(\begin{array}{c}
s,t\\
u,v
\end{array}\right)\right|^{p}\leq\omega\left(\left[s,t\right]\times\left[u,v\right]\right)
\]
\end{defn}
\begin{lem}
\label{lem:controlvar}(\cite{FV10}, Lemma 5.56) Let $f:\triangle\times\triangle\rightarrow\mathbb{R}$
be a continuous function. Then $f$ has finite $\rho$-variation if
and only if there exists a 2D control $\omega$ such that $\omega$
controls the $\rho$-variation of $f$. 
\end{lem}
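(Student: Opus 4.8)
The plan is to prove the two implications separately. The easy direction is that the existence of a controlling $2$D control $\omega$ forces finite $\rho$-variation: given partitions $\left(t_{i}\right)$ and $\left(t_{j}'\right)$ of $\left[0,T\right]$, I would sum the defining bound $\left|f\left(\begin{array}{c}t_{i},t_{i+1}\\t_{j}',t_{j+1}'\end{array}\right)\right|^{\rho}\le\omega\left(\left[t_{i},t_{i+1}\right]\times\left[t_{j}',t_{j+1}'\right]\right)$ over the cells of the grid and then iterate the super-additivity of $\omega$ --- first over each vertical column of cells, whose union is a genuine rectangle, and then over those columns --- to obtain $\sum_{i,j}\omega\left(\left[t_{i},t_{i+1}\right]\times\left[t_{j}',t_{j+1}'\right]\right)\le\omega\left(\left[0,T\right]^{2}\right)$. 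Combining with the pointwise bounds yields $\sum_{i,j}\left|f\left(\begin{array}{c}t_{i},t_{i+1}\\t_{j}',t_{j+1}'\end{array}\right)\right|^{\rho}\le\omega\left(\left[0,T\right]^{2}\right)$ uniformly in the partitions, so $\left|f\right|_{\rho-var;\left[0,T\right]^{2}}\le\omega\left(\left[0,T\right]^{2}\right)^{1/\rho}<\infty$.

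For the converse, assume $\left|f\right|_{\rho-var;\left[0,T\right]^{2}}<\infty$. The obvious guess $\omega\left(R\right):=\left|f\right|_{\rho-var;R}^{\rho}$ is continuous (by uniform continuity of $f$ on the compact square), vanishes on rectangles of zero Lebesgue measure because the increment $f\left(\begin{array}{c}s,t\\u,v\end{array}\right)$ vanishes whenever $s=t$ or $u=v$, and dominates $\left|f\left(\begin{array}{c}s,t\\u,v\end{array}\right)\right|^{\rho}$; but, unlike in one dimension, it need not be super-additive. I would therefore replace it by the super-additive hull
\[
\omega\left(R\right):=\sup\left\{ \sum_{k=1}^{m}\left|f\left(R_{k}\right)\right|^{\rho}:\ R_{1},\ldots,R_{m}\subset R\ \text{pairwise disjoint rectangles}\right\} ,
\]
for which super-additivity is immediate --- near-optimal disjoint families inside $R_{1}$ and inside $R_{2}$ together form a disjoint family inside any rectangle $R\supseteq R_{1}\cup R_{2}$, since $R_{1}\cap R_{2}=\emptyset$ --- while $\omega$ still dominates $\left|f\left(\begin{array}{c}s,t\\u,v\end{array}\right)\right|^{\rho}$ and, any sub-rectangle of a degenerate rectangle being degenerate, still vanishes on rectangles of zero Lebesgue measure.

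The main obstacle is to show that this $\omega$ is finite and continuous. Finiteness is precisely the step where the hypothesis of finite $\rho$-variation must be used in an essential way: given a disjoint family $R_{1},\ldots,R_{m}\subset\left[0,T\right]^{2}$, I would extend all of their sides to a common product grid of $\left[0,T\right]^{2}$ and bound $\sum_{k}\left|f\left(R_{k}\right)\right|^{\rho}$ against $\sum_{\text{cells}}\left|f\left(\text{cell}\right)\right|^{\rho}\le\left|f\right|_{\rho-var;\left[0,T\right]^{2}}^{\rho}$ by a two-dimensional maximal inequality of Young type. Continuity of $\omega$ would then follow from the uniform continuity of $f$ together with this finiteness, which prevents the $\rho$-variation from concentrating on vanishingly thin strips --- the two-dimensional counterpart of the classical fact that $t\mapsto\left\Vert g\right\Vert _{\rho-var;\left[0,t\right]}^{\rho}$ is continuous for continuous $g$ of finite $\rho$-variation. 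These two technical points are exactly the content of Lemma 5.56 in \cite{FV10}, whose argument I would follow for the remaining details.
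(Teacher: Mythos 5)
The paper does not actually prove this lemma: it is quoted verbatim from Friz--Victoir (Lemma 5.56 of their book), so there is no internal argument to compare yours against. Your forward implication is correct and complete: iterating the pairwise super-additivity of $\omega$ first along the cells of each column (whose partial unions are genuine rectangles) and then along the columns gives $\sum_{i,j}\omega\left(\left[t_{i},t_{i+1}\right]\times\left[t_{j}^{\prime},t_{j+1}^{\prime}\right]\right)\leq\omega\left(\left[0,T\right]^{2}\right)$, hence $\left|f\right|_{\rho-var;\left[0,T\right]^{2}}^{\rho}\leq\omega\left(\left[0,T\right]^{2}\right)$.

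The converse, however, has a genuine gap exactly at the step you defer as ``technical'': the finiteness of the super-additive hull. With the grid-based definition of $\rho$-variation used in the paper (equation (\ref{eq:pvar})), it is \emph{not} true for $\rho>1$ that finite $\rho$-variation implies finiteness of $\sup\sum_{k}\left|f\left(R_{k}\right)\right|^{\rho}$ over arbitrary finite families of pairwise disjoint rectangles. The obstruction is visible in your own sketch: after refining a disjoint family to a common product grid, each $R_{k}$ is a union of possibly many cells, and bounding $\left|f\left(R_{k}\right)\right|^{\rho}\leq\left(\sum_{C\subset R_{k}}\left|f\left(C\right)\right|\right)^{\rho}$ costs a factor of the number of cells to the power $\rho-1$; the ``two-dimensional maximal inequality of Young type'' you invoke does not exist at the same exponent. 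This is precisely the content of Friz and Victoir's subsequent note ``A note on higher dimensional $p$-variation'' (Electron.\ J.\ Probab.\ 16, 2011), which distinguishes grid $\rho$-variation from ``controlled'' $\rho$-variation (your hull), exhibits functions of finite $\rho$-variation with infinite controlled $\rho$-variation, and shows one can only recover a 2D control after passing to an exponent $\rho^{\prime}>\rho$. So the ``only if'' direction, as stated for general $\rho$, needs either the controlled notion or a loss in the exponent. None of this affects the present paper: the lemma is invoked only with $\rho=1$, where your hull argument does close (the triangle inequality replaces the problematic power-mean step), and in Proposition \ref{prop:limit} the control is in any case constructed explicitly, so only your easy direction is ever used.
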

We will now recall the existence and some properties of the signatures
of Gaussian processes with strictly regular kernels.

Let $X$ be a $d$-dimensional Gaussian process, then its covariance
function is defined as $R_{X}\left(s,t\right):=\mathbb{E}\left[X_{s}\otimes X_{t}\right]$. 

The following theorem gives the existence of the signatures of some
Gaussian processes and the approximation result that the limit of
$\mathbb{E}\left[S\left(X^{D}\right)_{0,T}\right]$ as $\left\Vert D\right\Vert \rightarrow0$
is $\mathbb{E}\left[S\left(X\right)_{0,T}\right]$: 
\begin{thm}
\label{thm:aproxlem}(\cite{FR}) Let $X=\left(X^{1},..,X^{d}\right):\left[0,T\right]\rightarrow\mathbb{R}^{d}$
be a centered, continuous Gaussian process on a probability space
$\left(\Omega,\mathcal{F},\mathbb{P}\right)$, with $X^{i},X^{j}$
being independent if $i\neq j$. Assume that the covariance function
of $X$, denoted by $R_{X}$, has finite $\rho$-variation for some
$\rho\in[1,2)$ and that there exists a finite constant $K$ such
that $\left|R_{X}\right|_{p-var,\left[0,T\right]^{2}}\leq K$ (see
(\ref{eq:pvar})). Then there exists a process $\mathbf{X}$ with
sample paths almost surely in $\mathcal{V}^{p}\left(T^{\left\lfloor p\right\rfloor }\left(\mathbb{R}^{d}\right)\right)$
for all $p\in\left(2\rho,4\right)$, such that for any $\gamma>\rho$
, $\frac{1}{\gamma}+\frac{1}{\rho}>1$ and any $q>2\gamma$ and $N\in\mathbb{N}$
there exists a constant $C=C\left(q,\rho,\gamma,K,N,T\right)$ such
that for all $r\geq1$, 
\[
\left|\rho_{q-var}^{\left(N\right)}\left(S_{N}\left(X^{\left(k\right)}\right),S_{N}\left(\mathbf{X}\right)\right)\right|_{L^{r}}\leq Cr^{\frac{N}{2}}\sup_{0\leq t\leq T}\left|X_{t}^{\left(k\right)}-X_{t}\right|_{L^{2}}^{1-\frac{\rho}{\gamma}}
\]
for any piecewise linear interpolation $X^{\left(k\right)}$of $X$,
where the width of the partition is smaller than $\frac{1}{k}$. \end{thm}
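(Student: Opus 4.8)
The plan is to realise $\mathbf{X}$ as the $L^{2}$-limit of the canonical lifts $S_{N}(X^{(k)})$ and to obtain the stated rate in two stages: an interpolation performed at the level of covariances, which produces the exponent $1-\rho/\gamma$, followed by a passage from $L^{2}$ to $L^{r}$ by Gaussian hypercontractivity, which produces the prefactor $r^{N/2}$.

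The first step is to establish existence together with uniform variation bounds. By Lemma \ref{lem:controlvar} the hypothesis $\left|R_{X}\right|_{p-var;[0,T]^{2}}\leq K$ furnishes a 2D control $\omega$ dominating the $\rho$-variation of $R_{X}$. The key preliminary is to show that the covariances $R_{X^{(k)}}$ of the piecewise linear interpolations (Definition \ref{interpol}), and more importantly the joint covariance of $(X,X^{(k)})$ on $[0,T]^{2}$, have 2D $\rho$-variation bounded by a fixed multiple of $K$, uniformly in $k$. This should follow from the piecewise-linear structure together with the superadditivity axiom of $\omega$: on each rectangle the mixed second difference of $R_{X^{(k)}}$ is an average of mixed second differences of $R_{X}$ over subrectangles, so the dominating control transfers. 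Since $\rho<2$ gives $\frac{1}{\rho}+\frac{1}{\rho}>1$, the second-level iterated integrals exist as 2D Young integrals, the sequence $S_{N}(X^{(k)})$ is Cauchy in $L^{2}$ in the $q$-variation metric, and its limit $\mathbf{X}$ has sample paths in $\mathcal{V}^{p}(T^{\left\lfloor p\right\rfloor }(\mathbb{R}^{d}))$ for $p\in(2\rho,4)$ by a Kolmogorov-type criterion for rough paths.

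Next I would perform the covariance interpolation, which is where the exponent is born. Writing $R_{X-X^{(k)}}$ for the covariance of the difference process, Step 1 gives a uniform bound $\left|R_{X-X^{(k)}}\right|_{\rho-var;[0,T]^{2}}\leq CK$, while its supremum is controlled by the diagonal, $\sup_{s,t}\left|R_{X-X^{(k)}}(s,t)\right|\leq\sup_{0\leq t\leq T}\left|X_{t}^{(k)}-X_{t}\right|_{L^{2}}^{2}$. Interpolating a 2D function between its $\rho$-variation and its supremum yields, for $\gamma>\rho$,
\[
\left|R_{X-X^{(k)}}\right|_{\gamma-var;[0,T]^{2}}\leq C\,K^{\rho/\gamma}\left(\sup_{0\leq t\leq T}\left|X_{t}^{(k)}-X_{t}\right|_{L^{2}}\right)^{2(1-\rho/\gamma)}.
\]
Because level-one increments lie in the first Wiener chaos, their $L^{2}$ distance is a square root of this difference covariance, which converts the exponent $2(1-\rho/\gamma)$ into the exponent $1-\rho/\gamma$ appearing in the statement.

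Finally I would combine this with the Gaussian rough-path continuity estimate and hypercontractivity. Feeding the $\gamma$-variation control of the difference covariance into the continuity of the Gaussian lift --- valid for $q>2\gamma$ and requiring $\frac{1}{\gamma}+\frac{1}{\rho}>1$ so that the 2D Young integrals comparing $X$ with $X^{(k)}$ are admissible --- bounds $\rho_{q-var}^{(N)}(S_{N}(X^{(k)}),S_{N}(\mathbf{X}))$ in $L^{2}$ by a power of $\left|R_{X-X^{(k)}}\right|_{\gamma-var;[0,T]^{2}}$, hence by $(\sup_{t}\left|X_{t}^{(k)}-X_{t}\right|_{L^{2}})^{1-\rho/\gamma}$. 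Since the level-$k$ component lies in the $k$-th Wiener chaos, Gaussian hypercontractivity (equivalence of $L^{r}$ and $L^{2}$ norms on a fixed chaos, with constant of order $r^{k/2}$) upgrades these to $L^{r}$ bounds; maximising over $1\leq k\leq N$ produces the prefactor $r^{N/2}$ and gives the claimed estimate with $C=C(q,\rho,\gamma,K,N,T)$. The main obstacle is the first step: proving that the joint covariance of $(X,X^{(k)})$ has 2D $\rho$-variation bounded uniformly in the mesh, since one must track the mixed second differences of $R_{X}$ under piecewise-linear interpolation and verify that the dominating control survives. The remaining ingredients --- the covariance interpolation, the continuity of the Gaussian lift, and hypercontractivity --- are by now standard in Gaussian rough path theory.
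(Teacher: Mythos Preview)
The paper does not prove this theorem at all: it is quoted verbatim from \cite{FR} (Friz--Riedel) and used as a black-box input to Proposition~\ref{prop:limit}. There is therefore no ``paper's own proof'' to compare against.

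That said, your sketch is a faithful outline of the argument in \cite{FR}: uniform $\rho$-variation control of the joint covariance of $(X,X^{(k)})$ under piecewise-linear interpolation, interpolation between $\rho$-variation and sup-norm of the difference covariance to produce the exponent $1-\rho/\gamma$, continuity of the Gaussian lift under the Young condition $\frac{1}{\gamma}+\frac{1}{\rho}>1$, and hypercontractivity on the $k$-th Wiener chaos to pass from $L^{2}$ to $L^{r}$ with the factor $r^{N/2}$. You correctly identify the genuine technical work as the uniform 2D $\rho$-variation bound for the mixed covariance, and you are right that the remaining ingredients are by now standard. If your intent was to supply a self-contained proof rather than cite \cite{FR}, the sketch is on the right track but would need the details of that first step filled in; for the purposes of the present paper, however, simply citing \cite{FR} is what the authors do.
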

\begin{defn}
Let $X$ be a process satisfying the conditions in Theorem \ref{thm:aproxlem},
we shall denote the process $\mathbf{X}$ in Theorem \ref{thm:aproxlem}
by $S\left(X\right)$ and the expected signature of $X$ on $\left[0,T\right]$
is defined to be $\mathbb{E}\left[S\left(X\right)_{0,T}\right]$. \end{defn}
\begin{prop}
\label{prop:limit}Let $X$ be a $d$-dimensional Gaussian process
with a strictly regular kernel. Then the covariance function of $X$
has finite $1$-variation.

Moreover, let $D^{m}$ be the dyadic partition $D^{m}=\left(0,\frac{T}{2^{m}},\frac{2T}{2^{m}},..,\frac{\left(2^{m}-1\right)T}{2^{m}},T\right)$.
Then for all $N\in\mathbb{N}$, 
\[
\left|\mathbb{E}\left[S_{N}\left(X^{D^{m}}\right)\right]-\mathbb{E}\left[S_{N}\left(X\right)\right]\right|\rightarrow0
\]
as $m\rightarrow\infty$, where $\left|\cdot\right|$ is the norm
induced by identifying $\left(\mathbb{R}^{d}\right)^{\otimes n}$
with $\mathbb{R}^{d^{n}}$.\end{prop}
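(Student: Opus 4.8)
The plan is to deduce the finite $1$-variation of the covariance directly from Lemma \ref{lem:fund}, and then to feed this into the quantitative approximation result Theorem \ref{thm:aproxlem} to get the convergence of expected signatures.

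For the first assertion, let $R$ be the scalar covariance function of a single component of $X$. By Lemma \ref{lem:fund} there is an integrable symmetric function $f$ on $[0,T]^{2}$ with $R(s,t)=\int_{0}^{s}\int_{0}^{t}f(u,v)\,\mathrm{d}u\,\mathrm{d}v$, so that for $s\le t$ and $u\le v$,
\[
R\left(\begin{array}{c}s,t\\u,v\end{array}\right)=\int_{u}^{v}\int_{s}^{t}f(a,b)\,\mathrm{d}a\,\mathrm{d}b .
\]
Hence for any partitions $(t_{i})$ and $(t_{j}')$ of $[0,T]$,
\[
\sum_{i,j}\left|R\left(\begin{array}{c}t_{i},t_{i+1}\\t_{j}',t_{j+1}'\end{array}\right)\right|\le\sum_{i,j}\int_{t_{j}'}^{t_{j+1}'}\int_{t_{i}}^{t_{i+1}}|f|=\int_{0}^{T}\int_{0}^{T}|f|<\infty ,
\]
so $\left|R\right|_{1-var;[0,T]^{2}}\le\|f\|_{L^{1}}$. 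As the $d$ components of $X$ are i.i.d., $R_{X}=R\,I_{d}$, and the same bound (up to a dimensional constant) controls the $1$-variation of $R_{X}$ after identifying $(\mathbb{R}^{d})^{\otimes2}$ with $\mathbb{R}^{d^{2}}$. Thus the hypotheses of Theorem \ref{thm:aproxlem} hold with $\rho=1$ and $K=\|f\|_{L^{1}}$; that $X$ has a continuous modification is classical once its covariance has finite $\rho$-variation with $\rho<2$ (see \cite{FV10}). I write $\mathbf{X}=S(X)$ for the rough path of Theorem \ref{thm:aproxlem} and $S_{N}(X)=S_{N}(\mathbf{X})$ for its level-$N$ extension (Theorem \ref{thm:extension}).

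For the convergence I would first show that the dyadic interpolants converge to $X$ uniformly in $L^{2}$. For $t\in[t_{i},t_{i+1}]$ one has $X_{t}^{D^{m}}=(1-\lambda)X_{t_{i}}+\lambda X_{t_{i+1}}$ for some $\lambda\in[0,1]$, so by the triangle inequality in $L^{2}$, $|X_{t}^{D^{m}}-X_{t}|_{L^{2}}\le\max\big(|X_{t_{i}}-X_{t}|_{L^{2}},\,|X_{t_{i+1}}-X_{t}|_{L^{2}}\big)$, while Lemma \ref{lem:fund} gives $|X_{s}-X_{t}|_{L^{2}}^{2}\le d\int_{[s\wedge t,\,s\vee t]^{2}}|f|$. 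Since the square $[s\wedge t,s\vee t]^{2}$ has Lebesgue measure at most $(T/2^{m})^{2}$, absolute continuity of the Lebesgue integral of $|f|$ forces $\sup_{0\le t\le T}|X_{t}^{D^{m}}-X_{t}|_{L^{2}}\to0$ as $m\to\infty$. I then apply Theorem \ref{thm:aproxlem} with $\rho=1$, a fixed $\gamma>1$ (so $\tfrac{1}{\gamma}+\tfrac{1}{\rho}>1$ automatically), some $q>2\gamma$, the given $N$, and $r=1$: since $X^{D^{m}}$ is a piecewise linear interpolation of mesh $T/2^{m}$, which is smaller than $1/k_{m}$ for a sequence $k_{m}\to\infty$, the theorem yields
\[
\mathbb{E}\big[\rho_{q-var}^{(N)}\big(S_{N}(X^{D^{m}}),S_{N}(\mathbf{X})\big)\big]\le C\Big(\sup_{0\le t\le T}|X_{t}^{D^{m}}-X_{t}|_{L^{2}}\Big)^{1-1/\gamma},
\]
and the right-hand side tends to $0$.

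Finally, for each grading $n\le N$, evaluating $\rho_{q-var}^{(N)}$ on the trivial partition $\{0,T\}$ gives the pointwise bound $|\pi_{n}(S_{N}(X^{D^{m}})_{0,T})-\pi_{n}(S_{N}(\mathbf{X})_{0,T})|\le\rho_{q-var}^{(N)}(S_{N}(X^{D^{m}}),S_{N}(\mathbf{X}))$. All terms here are integrable: $\pi_{n}(S_{N}(X^{D^{m}})_{0,T})$ is a polynomial of degree at most $n$ in jointly Gaussian increments, hence has moments of every order, and $\pi_{n}(S_{N}(\mathbf{X})_{0,T})$ is an $L^{1}$-limit of these. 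Taking expectations and using $|\mathbb{E}Y|\le\mathbb{E}|Y|$ we obtain $|\mathbb{E}[\pi_{n}(S_{N}(X^{D^{m}})_{0,T})]-\mathbb{E}[\pi_{n}(S_{N}(X)_{0,T})]|\to0$ for every $n\le N$, which, since the norm on $T^{N}(\mathbb{R}^{d})$ is the maximum over gradings, is exactly the assertion. The only \emph{genuinely delicate} step is the uniform $L^{2}$ convergence of the interpolants, which rests on the absolute continuity of $\int|f|$; the rest is bookkeeping around Theorem \ref{thm:aproxlem} and the definition of $\rho_{q-var}^{(N)}$.
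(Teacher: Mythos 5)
Your proposal is correct and follows essentially the same route as the paper: establish finite $1$-variation of the covariance from Lemma \ref{lem:fund}, prove $\sup_{0\le t\le T}|X_{t}^{D^{m}}-X_{t}|_{L^{2}}\rightarrow0$ via the $L^{1}$ bound on $f$ over small squares near the diagonal, and conclude through Theorem \ref{thm:aproxlem} together with the pointwise domination of the signature difference at $(0,T)$ by $\rho_{q-var}^{(N)}$. The only cosmetic differences are that you bound the partition sums for the $1$-variation directly rather than invoking Lemma \ref{lem:controlvar}, and you phrase the diagonal-squares estimate via absolute continuity of $\int|f|$ rather than convergence of $\cup_{k}[t_{k}^{m},t_{k+1}^{m}]^{2}$ to a null set; both are equivalent to the paper's argument.
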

\begin{proof}
By Lemma \ref{lem:fund} and its notation, the $1$-variation of the
covariance function of $X$, $R_{X}$, is controlled by 
\[
\omega\left(\left[s,t\right]\times\left[u,v\right]\right):=\int_{s}^{t}\int_{v}^{u}\left|f\left(x,y\right)\right|\mathrm{d}x\mathrm{d}y
\]
Thus by Lemma \ref{lem:controlvar}, $R_{X}$ has finite $1$-variation.
Moreover, the total $1$-variation of $R_{X}$ on $\left[0,T\right]^{2}$
is bounded above by $\int_{0}^{T}\int_{0}^{T}\left|f\left(x,y\right)\right|\mathrm{d}x\mathrm{d}y$
. Therefore, Theorem \ref{thm:aproxlem} applies. 

Observe that we have, in the pathwise sense, the following inequality
for all $N\in\mathbb{N}$,$q>2$: 
\[
\left|S_{N}\left(X^{D^{m}}\right)_{0,T}-S_{N}\left(\mathbf{X}\right)_{0,T}\right|\leq\left|\rho_{q-var}^{\left(N\right)}\left(S_{N}\left(X^{D^{m}}\right),S_{N}\left(\mathbf{X}\right)\right)\right|
\]

Thus by Theorem \ref{thm:aproxlem}, the only thing to prove is 
\[
\sup_{0\leq t\leq T}\left|X_{t}^{D^{m}}-X_{t}\right|_{L^{2}}\rightarrow0
\]

Let $t_{k}^{m}:=\frac{k}{2^{m}}T$. Then for $t_{k}^{m}\leq t\leq t_{k+1}^{m}$,
we have 
\[
\left|X_{t}^{D^{m}}-X_{t}\right|\leq\left|X_{t_{k+1}^{m}}-X_{t}\right|\left(\frac{t-t_{k}^{m}}{t_{k+1}^{m}-t_{k}^{m}}\right)+\left|X_{t_{k}^{m}}-X_{t}\right|\left(\frac{t_{k+1}^{m}-t}{t_{k+1}^{m}-t_{k}^{m}}\right)
\]

Thus by Lemma \ref{lem:fund} and its notation, 
\begin{eqnarray*}
\left|X_{t}^{D^{m}}-X_{t}\right|_{L^{2}} & \leq & \left|X_{t_{k+1}^{m}}-X_{t}\right|_{L^{2}}\left(\frac{t-t_{k}^{m}}{t_{k+1}^{m}-t_{k}^{m}}\right)+\left|X_{t_{k}^{m}}-X_{t}\right|_{L^{2}}\left(\frac{t_{k+1}^{m}-t}{t_{k+1}^{m}-t_{k}^{m}}\right)\\
 & \leq & \left(\frac{t-t_{k}^{m}}{t_{k+1}^{m}-t_{k}^{m}}\right)\left(\int_{\left[t,t_{k+1}^{m}\right]^{2}}\left|f\left(u,v\right)\right|\mathrm{d}u\mathrm{d}v\right)^{\frac{1}{2}}\\
 &  & +\left(\frac{t_{k+1}^{m}-t}{t_{k+1}^{m}-t_{k}^{m}}\right)\left(\int_{\left[t_{k}^{m},t\right]^{2}}\left|f\left(u,v\right)\right|\mathrm{d}u\mathrm{d}v\right)^{\frac{1}{2}}\\
 & \leq & 2\left(\int_{\left[t_{k}^{m},t_{k+1}^{m}\right]^{2}}\left|f\left(u,v\right)\right|\mathrm{d}u\mathrm{d}v\right)^{\frac{1}{2}}
\end{eqnarray*}

Note 
\begin{eqnarray*}
\left(\sup_{0\leq t\leq1}\left|X_{t}^{D^{m}}-X_{t}\right|_{L^{2}}\right)^{2} & \leq & 4\max_{1\leq k\leq2^{m}}\int_{\left[t_{k}^{m},t_{k+1}^{m}\right]^{2}}\left|f\left(u,v\right)\right|\mathrm{d}u\mathrm{d}v\\
 & \leq & 4\int_{\cup_{k=1}^{2^{m}}\left[t_{k}^{m},t_{k+1}^{m}\right]^{2}}\left|f\left(u,v\right)\right|\mathrm{d}u\mathrm{d}v
\end{eqnarray*}

The set $\cup_{k=1}^{2^{m}}\left[t_{k}^{m},t_{k+1}^{m}\right]^{2}$
converge to a set with Lebesgue measure zero as $m\rightarrow\infty$,
and $f$ is integrable on $\mathbb{R}^{2}$. Thus 
\[
\left(\sup_{0\leq t\leq1}\left|X_{t}^{D^{m}}-X_{t}\right|_{L^{2}}\right)^{2}\rightarrow0
\]
as $m\rightarrow\infty$. 
\end{proof}

\section{Computation }

In this section we shall calculate the expected signatures of Gaussian
processes with regular kernels. 

A key tool in our calculation is the following Wick's formula (also
known as Isserlis' theorem):
\begin{thm}
\cite{Wick's formula}Let $\left(W_{1},\ldots,W_{n}\right)$ be a
Gaussian vector. Then 
\[
\mathbb{E}\left(\Pi_{i=1}^{n}W_{i}\right)=\begin{cases}
\begin{array}{c}
0\\
\sum_{\pi\in\Pi_{n}}\Pi_{\left(i,j\right)\in\pi}\mathbb{E}\left(W_{i}W_{j}\right)
\end{array} & \begin{array}{c}
\mbox{if}\; n\;\mbox{is odd}.\\
\mbox{if }n=2k\;\mbox{is even}.
\end{array}\end{cases}
\]

\end{thm}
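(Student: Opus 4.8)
The plan is to compute the joint moment $\mathbb{E}\left(\prod_{i=1}^{n}W_{i}\right)$ by differentiating the moment generating function of the Gaussian vector. I will assume throughout that $\left(W_{1},\ldots,W_{n}\right)$ is centered, which is the setting in which the formula holds and the only case relevant to our application. The starting point is that for a centered Gaussian vector with covariance $\sigma_{kl}:=\mathbb{E}\left(W_{k}W_{l}\right)$, the moment generating function is
\[
\Phi\left(t_{1},\ldots,t_{n}\right):=\mathbb{E}\left[\exp\left(\sum_{k=1}^{n}t_{k}W_{k}\right)\right]=\exp\left(\frac{1}{2}\sum_{k,l=1}^{n}\sigma_{kl}t_{k}t_{l}\right),
\]
which is finite for all $t\in\mathbb{R}^{n}$ since a Gaussian vector has finite exponential moments. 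Because $\Phi$ is smooth and all moments of $W$ are finite, differentiation under the expectation is justified by dominated convergence, so that
\[
\mathbb{E}\left(\prod_{i=1}^{n}W_{i}\right)=\left.\frac{\partial^{n}\Phi}{\partial t_{1}\cdots\partial t_{n}}\right|_{t=0}.
\]

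Next I would carry out this differentiation combinatorially. Writing $L_{j}\left(t\right):=\sum_{l}\sigma_{jl}t_{l}$, one has $\partial_{t_{j}}\Phi=L_{j}\left(t\right)\Phi\left(t\right)$ and $\partial_{t_{i}}L_{j}\left(t\right)=\sigma_{ij}$. Applying $\partial_{t_{1}}\cdots\partial_{t_{n}}$ in succession and expanding by the product rule, every factor produced is either a linear form $L_{j}$ (created when a derivative lands on $\Phi$) or a constant $\sigma_{ij}$ (created when a derivative lands on a previously-created linear form). Evaluating at $t=0$ annihilates every term that still contains a linear form, since $L_{j}\left(0\right)=0$. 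The surviving terms are therefore precisely those in which every index has been paired with another through an application of $\partial_{t_{i}}L_{j}=\sigma_{ij}$; each such term corresponds to a perfect matching $\pi$ of $\left\{ 1,\ldots,n\right\}$ and contributes $\prod_{\left(i,j\right)\in\pi}\sigma_{ij}$. This reproduces $\sum_{\pi\in\Pi_{n}}\prod_{\left(i,j\right)\in\pi}\mathbb{E}\left(W_{i}W_{j}\right)$ when $n$ is even, while for $n$ odd no perfect matching exists, leaving always at least one unpaired linear form, so the moment vanishes.

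The step requiring the most care is the bookkeeping of this expansion: I must verify that the product rule generates each perfect matching exactly once and with coefficient one, rather than over- or under-counting. I would make this precise by induction on $n$, tracking the set of linear forms present after each differentiation. A more transparent alternative, which I would use to double-check the combinatorics, is to avoid the generating-function computation and treat the even case directly by Gaussian integration by parts,
\[
\mathbb{E}\left(W_{1}\cdots W_{n}\right)=\sum_{j=2}^{n}\mathbb{E}\left(W_{1}W_{j}\right)\,\mathbb{E}\left(W_{2}\cdots\widehat{W_{j}}\cdots W_{n}\right),
\]
where $\widehat{W_{j}}$ denotes omission of that factor. This identity fixes the partner of index $1$ and reduces the problem to pairings of the remaining $n-2$ indices, so induction with base case $n=2$ yields the formula. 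In this second approach the odd case is immediate from the distributional symmetry $W\mapsto-W$, which gives $\mathbb{E}\left(\prod_{i}W_{i}\right)=\left(-1\right)^{n}\mathbb{E}\left(\prod_{i}W_{i}\right)$ and hence forces the moment to be zero.
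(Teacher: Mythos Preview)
Your argument is correct: both the moment-generating-function computation and the Gaussian integration-by-parts recursion are standard, valid proofs of Isserlis' theorem, and your bookkeeping is sound. However, there is nothing to compare against here, because the paper does not prove this statement at all --- it is quoted as a classical result with a citation (the theorem carries a \texttt{\textbackslash cite} and no proof environment follows it). The authors simply invoke Wick's formula as a known tool in the proof of Lemma~\ref{lem:cal}, so your write-up goes beyond what the paper itself supplies.
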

We will now carry out some preliminary calculations. Recall that $f$
is the function defined in Lemma \ref{lem:fund}. 

Let $t_{k}^{m}$ denote the dyadic partition point $\frac{k}{2^{m}}T$.

Define $c_{i,j}$, with $1\leq i\leq2^{m},1\leq j\leq2^{m}$, by 
\[
c_{i,j}=\int_{\mathbb{R}^{2}}f\left(u,v\right)1_{\left[t_{i-1}^{m},t_{i}^{m}\right]\times\left[t_{j-1}^{m},t_{j}^{m}\right]}\mathrm{d}u\mathrm{d}v
\]

Recall that given a continuous path $W:\left[0,T\right]\rightarrow\mathbb{R}^{d}$,
we define the dyadic approximation of $W$ by the function 
\[
W\left(m\right)_{t}=\begin{cases}
\begin{array}{c}
W_{0}\\
\sum_{i}\left(W_{t_{k}^{m}}^{i}+\frac{2^{m}}{T}\left(W_{t_{k}^{m}}^{i}-W_{t_{k-1}^{m}}^{i}\right)\left(t-t_{k-1}^{m}\right)\right)e_{i},
\end{array} & \begin{array}{c}
t=0\\
t\in\left(t_{k-1}^{m},t_{k}^{m}\right]
\end{array}\end{cases}
\]

Let $W$ be a $d$-dimensional Gaussian process with a strictly regular
kernel, and let $W\left(m\right)$ be the random process obtained
from the dyadic linear interpolation of its sample paths. 

Let $\left(k_{1},\ldots,k_{2N}\right)$ be a finite sequence of natural
numbers, satisfying $1\leq k_{1}\leq k_{2}\leq..\leq k_{2N}\leq2^{m}$.
Then the symbol $\left|\#k=1\right|$ will denote the number of elements
in the set $\left\{ j:k_{j}=1\right\} $, and in general, let $\left|\#k=i\right|$
be the number of elements in the set $\left\{ j:k_{j}=i\right\} $.

The following lemma summarises our preliminary calculation: 
\begin{lem}
\label{lem:cal}If $k=2n$ for some $n$ and $i_{1},..,i_{2n}\in E_{2n}$,
then the projection onto the basis $e_{i_{1}}\otimes\cdots\otimes e_{i_{k}}$
of $\mathbb{E}\left(S\left(W\left(m\right)\right)_{0,T}\right)$ is
\[
\sum_{\pi\in\Pi_{i_{1},...,i_{2n}}}\sum_{1\leq k_{1}\leq k_{2}\leq\ldots\leq k_{2n}\leq2^{m}}\frac{1}{\left|\#k=1\right|!\ldots\left|\#k=2^{m}\right|!}\cdot\Pi_{\left(j,l\right)\in\pi}c_{k_{l},k_{j}}
\]
otherwise the projection onto $e_{i_{1}}\otimes\cdots\otimes e_{i_{k}}$
of $\mathbb{E}\left(S\left(W\left(m\right)\right)_{0,T}\right)$ is
zero. \end{lem}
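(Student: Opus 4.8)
The plan is to compute the projection $\pi_{i_1,\ldots,i_k}\bigl(\mathbb{E}[S(W(m))_{0,T}]\bigr)$ directly, exploiting the fact that $W(m)$ is piecewise linear, so its signature has a finite-dimensional integral representation. First I would write the $k$-th grading of the signature of a piecewise linear path. On each dyadic interval $(t^m_{a-1},t^m_a]$ the increment of $W(m)$ in coordinate $i$ is $\Delta^i_a := W^i_{t^m_a}-W^i_{t^m_{a-1}}$, and the derivative is constant there. Hence
\[
\pi_{i_1,\ldots,i_k}\bigl(S(W(m))_{0,T}\bigr)=\sum_{1\le a_1\le a_2\le\cdots\le a_k\le 2^m}\Bigl(\prod_{j=1}^k \Delta^{i_j}_{a_j}\Bigr)\cdot V(a_1,\ldots,a_k),
\]
where $V(a_1,\ldots,a_k)$ is the volume of the region in $\triangle_k(0,T)$ in which the $j$-th time variable lies in the $a_j$-th dyadic cell; when the $a_j$ are weakly increasing this volume is exactly $\prod_{i}(T/2^m)^{|\#a=i|}/|\#a=i|!$ by the standard simplex-volume computation (the iterated integral over a product of intervals, respecting the ordering, factorises into a product over the distinct cells of the volume of a simplex of the appropriate dimension). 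Substituting $W(m)_0=0$ handles the base term.

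Next I would take expectations and apply Wick's formula (Isserlis' theorem) to the Gaussian vector $(\Delta^{i_1}_{a_1},\ldots,\Delta^{i_k}_{a_k})$. Since the $d$ coordinate processes are i.i.d.\ and independent across coordinates, $\mathbb{E}[\Delta^{i_l}_{a_l}\Delta^{i_j}_{a_j}]=0$ unless $i_l=i_j$, and when $i_l=i_j$ it equals $\int_{\mathbb{R}^2} f\,1_{[t^m_{a_l-1},t^m_{a_l}]\times[t^m_{a_j-1},t^m_{a_j}]}=c_{a_l,a_j}$ by Lemma \ref{lem:fund}. Wick's formula then gives, for $k=2n$ even,
\[
\mathbb{E}\Bigl[\prod_{j=1}^{2n}\Delta^{i_j}_{a_j}\Bigr]=\sum_{\pi\in\Pi_{2n}}\prod_{(l,j)\in\pi}\mathbb{E}[\Delta^{i_l}_{a_l}\Delta^{i_j}_{a_j}]=\sum_{\pi\in\Pi_{i_1,\ldots,i_{2n}}}\prod_{(l,j)\in\pi}c_{a_l,a_j},
\]
because any pairing $\pi$ containing a pair $(l,j)$ with $i_l\neq i_j$ contributes a zero factor, so only pairings in $\Pi_{i_1,\ldots,i_{2n}}$ survive; and this set is empty unless $(i_1,\ldots,i_{2n})\in E_{2n}$, which also covers the "zero otherwise" claim (for $k$ odd the expectation vanishes by the odd case of Wick's formula). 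Plugging this into the sum over $a_1\le\cdots\le a_{2n}$, together with the volume factor $\prod_i (T/2^m)^{|\#a=i|}/|\#a=i|!$, and renaming $a_j\mapsto k_j$ yields exactly the claimed formula — noting the powers of $T/2^m$ are absorbed because each $c_{k_l,k_j}$ already carries the right scaling (one should check the bookkeeping: there are $2n$ factors $\Delta$, hence $2n$ intervals, and $n$ factors $c_{k_l,k_j}$, consistent with $\sum_i|\#k=i|=2n$).

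The routine but slightly delicate step is the justification of the volume identity: that the iterated integral $\int_{\triangle_{2n}(0,T)}\prod_j 1_{(t^m_{a_j-1},t^m_{a_j}]}(u_j)\,du_1\cdots du_{2n}$ over the ordered simplex, for a fixed weakly increasing multi-index, equals $\prod_i (T/2^m)^{|\#a=i|}/|\#a=i|!$; this follows by grouping consecutive equal indices and observing that within each block of cells the constraint is a lower-dimensional simplex of side $T/2^m$ whose volume is $(T/2^m)^{m_i}/m_i!$ with $m_i=|\#a=i|$, while across distinct cells the ordering is automatic. I expect the main obstacle (such as it is) to be purely notational — keeping the pairing $\pi$ as a set of pairs $(l,j)$ of \emph{positions} aligned with the cell indices $k_l,k_j$, and making sure the symmetry $f(u,v)=f(v,u)$ (hence $c_{i,j}=c_{j,i}$) is used so that the product $\prod_{(l,j)\in\pi}c_{k_l,k_j}$ is well-defined independently of the order within each pair. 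No convergence issues arise at this stage since everything is a finite sum for fixed $m$; passing $m\to\infty$ is deferred to the proof of Theorem \ref{thm:gen} using Proposition \ref{prop:limit}.
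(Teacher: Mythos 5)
Your strategy is sound and is in substance the same as the paper's: reduce the coefficient of $e_{i_{1}}\otimes\cdots\otimes e_{i_{k}}$ in $S\left(W\left(m\right)\right)_{0,T}$ to a finite sum of joint moments of the Gaussian increments, then apply Wick's formula and use independence of the coordinate components to discard every pairing outside $\Pi_{i_{1},\ldots,i_{2n}}$, which also yields the vanishing in the odd and $\notin E_{2n}$ cases. The only organizational difference is in how the combinatorial factor $\frac{1}{\left|\#k=1\right|!\cdots\left|\#k=2^{m}\right|!}$ is produced: you decompose the simplex into dyadic boxes and compute the Lebesgue volume of each piece directly, whereas the paper computes the signature of each linear segment as the tensor exponential $e^{\sum_{i}\left(W_{t_{k}^{m}}^{i}-W_{t_{k-1}^{m}}^{i}\right)e_{i}}$, concatenates via Chen's identity, and reads the coefficient off the product of exponential series. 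Both routes give the same multinomial factor, and the paper's remark following the lemma records essentially the volume identity you invoke.

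There is, however, a concrete bookkeeping error. Your first display writes the coefficient as $\sum\bigl(\prod_{j}\Delta_{a_{j}}^{i_{j}}\bigr)V\left(a_{1},\ldots,a_{k}\right)$ with $V$ the genuine Lebesgue volume $\prod_{i}\left(T/2^{m}\right)^{\left|\#a=i\right|}/\left|\#a=i\right|!$; this omits the factor $\left(2^{m}/T\right)^{k}$ coming from the constant derivative $\frac{2^{m}}{T}\Delta_{a}^{i}$ of the piecewise linear path on each cell. Your subsequent claim that the surplus powers of $T/2^{m}$ are ``absorbed because each $c_{k_{l},k_{j}}$ already carries the right scaling'' is false: $c_{i,j}$ is simply $\int_{\mathbb{R}^{2}}f\left(u,v\right)1_{\left[t_{i-1}^{m},t_{i}^{m}\right]\times\left[t_{j-1}^{m},t_{j}^{m}\right]}\mathrm{d}u\mathrm{d}v$ and carries no such factor. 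The correct cancellation is between the $\left(2^{m}/T\right)^{2n}$ contributed by the $2n$ derivative factors and the $\left(T/2^{m}\right)^{2n}$ contained in the volume; once that is inserted the stated formula follows exactly. This is a local fix rather than a gap in the method, but as written the scaling does not come out right.
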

\begin{rem}
In fact we can prove by induction that 
\[
\frac{1}{\left|\#k=1\right|!\ldots\left|\#k=2^{m}\right|!}=\int_{\left[t_{k_{1}-1}^{m},t_{k_{1}}^{m}\right]\times\ldots\times\left[t_{k_{2N}-1}^{m},t_{k_{2N}}^{m}\right]\cap\triangle_{2N}}1\mathrm{d}u_{1}\ldots\mathrm{d}u_{2N}
\]
but we shall not need it here.\end{rem}
\begin{proof}
Define $X_{k}:\left[t_{k-1}^{m},t_{k}^{m}\right]\rightarrow\mathbb{R}^{d}$
by
\[
X_{k}\left(t\right)=\sum_{i}\left(W_{t_{k}^{m}}^{i}+\frac{2^{m}}{T}\left(W_{t_{k}^{m}}^{i}-W_{t_{k-1}^{m}}^{i}\right)\left(t-t_{k-1}^{m}\right)\right)e_{i}
\]
Then 
\[
\begin{array}{ccc}
S\left(X_{k}\right) & = & 1+\sum_{n=1}^{\infty}\int_{t_{k-1}^{m}<s_{1}<..<s_{n}<t_{k}^{m}}dX_{s_{1}}\otimes\ldots\otimes dX_{s_{n}}\\
 & = & 1+\sum_{n=1}^{\infty}\int_{t_{k-1}^{m}<s_{1}<..<s_{n}<t_{k}^{m}}\left[\sum_{i}\frac{2^{m}}{T}\left(W_{t_{k}^{m}}^{i}-W_{t_{k-1}^{m}}^{i}\right)e_{i}\right]^{\otimes n}ds_{1}\ldots ds_{n}\\
 & = & 1+\sum_{n=1}^{\infty}\left[\sum_{i}\frac{2^{m}}{T}\left(W_{t_{k}^{m}}^{i}-W_{t_{k-1}^{m}}^{i}\right)e_{i}\right]^{\otimes n}\int_{t_{k-1}^{m}<s_{1}<..<s_{n}<t_{k}^{m}}ds_{1}\ldots ds_{n}\\
 & = & 1+\sum_{n=1}^{\infty}\left[\sum_{i}\frac{2^{m}}{T}\left(W_{t_{k}^{m}}^{i}-W_{t_{k-1}^{m}}^{i}\right)e_{i}\right]^{\otimes n}\frac{1}{n!}\left(t_{k}^{m}-t_{k-1}^{m}\right)^{n}\\
 & = & e^{\sum_{i}\left(W_{t_{k}^{m}}^{i}-W_{t_{k-1}^{m}}^{i}\right)e_{i}}
\end{array}
\]

By Chen's identity,
\begin{equation}
\begin{array}{ccc}
S\left(W\left(m\right)_{t}\right) & = & \mathbb{E}\left[e^{\sum_{i}\left(W_{t_{1}^{m}}^{i}-W_{t_{0}^{m}}^{i}\right)e_{i}}\otimes\ldots\otimes e^{\sum_{i}\left(W_{t_{2^{m}}^{m}}^{i}-W_{t_{2^{m}-1}^{m}}^{i}\right)e_{i}}\right]\end{array}\label{eq:m1}
\end{equation}

We have 
\[
e^{\sum_{i}\left(W_{k}^{i}-W_{k-1}^{i}\right)e_{i}}=\sum_{j=0}^{\infty}\frac{1}{j!}\left(\sum_{i}\left(W_{t_{k}^{m}}^{i}-W_{t_{k-1}^{m}}^{i}\right)e_{i}\right)^{\otimes j}
\]

The coefficient of $e_{i_{1}}\otimes e_{i_{2}}\otimes\ldots\otimes e_{i_{n}}$
in the expansion of 
\[
\otimes_{k=1}^{2^{m}}e^{\sum_{i}\left(W_{k}^{i}-W_{k-1}^{i}\right)e_{i}}=\otimes_{k=1}^{2^{m}}\left[\sum_{j=0}^{\infty}\frac{1}{j!}\left(\sum_{i}\left(W_{t_{k}^{m}}^{i}-W_{t_{k-1}^{m}}^{i}\right)e_{i}\right)^{j}\right]
\]
is
\[
\sum_{1\leq k_{1}\leq k_{2}\leq\ldots\leq k_{n}\leq2^{m}}\frac{\Pi_{j=1}^{n}\left(W_{t_{k_{j}}^{m}}^{i_{j}}-W_{t_{k_{j}-1}^{m}}^{i_{j}}\right)}{\left|\#k=1\right|!\ldots\left|\#k=2^{m}\right|!}
\]

If $n$ were odd, then as the expected value of the product of an
odd number of Gaussian random variables is zero, we have the coefficient
of $e_{i_{1}}\otimes\ldots\otimes e_{i_{n}}$ in $\mathbb{E}\left(S\left(W\left(m\right)\right)_{0,t}\right)$
being zero. 

If $\left(i_{1},..,i_{2n}\right)\notin E_{2n}$, then since the process
$\left(W_{t}:t\geq0\right)$ has independent components and that the
expected value of the product of an odd number of Gaussian random
variables is zero, thus 
\[
\mathbb{E}\left(\Pi_{j=1}^{n}\left(W_{t_{k_{j}}^{m}}^{i_{j}}-W_{t_{k_{j}-1}^{m}}^{i_{j}}\right)\right)=0,
\]
which in turn implies that

\[
\begin{array}{c}
\pi^{i_{1},..,i_{2n}}\mathbb{E}\left[S\left(W\left(m\right)_{0,1}\right)\right]=0\end{array},
\]
when $\left(i_{1},..,i_{2n}\right)\notin E_{2n}$. 

We now calculate the projection to $e_{i_{1}}\otimes\cdots\otimes e_{i_{2n}}$
of $\mathbb{E}\left(S\left(W\left(m\right)\right)_{0,t}\right)$,
which by Wick's formula equals 
\begin{equation}
\begin{array}{cc}
 & \pi_{i_{1},...,i_{2n}}\left[\mathbb{E}\left(S\left(W\left(m\right)\right)_{0,t}\right)\right]\\
= & \sum_{\pi\in\Pi_{i_{1},...,i_{2n}}}\sum_{1\leq k_{1}\leq\ldots\leq k_{2n}\leq2^{m}}\frac{1}{\left|\#k=1\right|!...\left|\#k=2^{m}\right|!}\Pi_{\left(l,j\right)\in\pi}\\
 & \mathbb{E}\left[\left(W_{t_{k_{j}}^{m}}^{i_{j}}-W_{t_{k_{j}-1}^{m}}^{i_{j}}\right)\left(W_{t_{k_{l}}^{m}}^{i_{j}}-W_{t_{k_{l}-1}^{m}}^{i_{j}}\right)\right]\\
= & \sum_{\pi\in\Pi_{i_{1},...,i_{2n}}}\sum_{1\leq k_{1}\leq\ldots\leq k_{2n}\leq2^{m}}\frac{1}{\left|\#k=1\right|!\ldots\left|\#k=2^{m}\right|!}\cdot\Pi_{\left(l,j\right)\in\pi}c_{k_{l},k_{j}}
\end{array}\label{eq:1}
\end{equation}
where $\sum_{\pi\in\Pi_{i_{1},...,i_{2n}}}$ is the sum over all possible
pairings $\left(l,j\right)$, $j>l$ from the set $\Pi_{i_{1},...,i_{2n}}$.
\end{proof}
The following lemma is crucial to our calculation of the sum in Lemma
\ref{lem:cal}.
\begin{lem}
\textup{\label{lem:3 dia}For $i\geq1$ and any pairing $\pi$ of
$\left\{ 1,..,2n\right\} $, 
\[
\sum_{1\leq k_{1}\leq..\leq k_{i}=k_{i+1}\leq k_{i+2}\ldots\leq k_{2n}\leq2^{m}}\Pi_{\left(l,j\right)\in\pi}\left|c_{k_{l},k_{j}}\right|\rightarrow0\;\mbox{as}\; m\rightarrow\infty
\]
}\end{lem}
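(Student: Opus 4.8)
The statement asserts that a constrained version of the sum in Lemma \ref{lem:cal} — where two of the ordered indices are forced to coincide, $k_i = k_{i+1}$ — vanishes in the limit. The key structural fact is that each factor $\left|c_{k_l,k_j}\right|$ is an integral of $\left|f\right|$ over a dyadic rectangle, and $\left|f\right| \in L^1\left(\left[0,T\right]^2\right)$ by Lemma \ref{lem:fund}. The plan is to rewrite the constrained sum as an integral of a product of indicator-weighted copies of $\left|f\right|$ over a region whose Lebesgue measure shrinks to zero, then invoke dominated convergence (or absolute continuity of the integral).

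First I would introduce, for a choice $1 \leq k_1 \leq \dots \leq k_{2n} \leq 2^m$, the cube $Q_{k_1,\dots,k_{2n}} := \prod_{r=1}^{2n}\left[t_{k_r-1}^m, t_{k_r}^m\right] \subset \left[0,T\right]^{2n}$, and note that by the Remark the normalising factor $\frac{1}{\left|\#k=1\right|!\cdots\left|\#k=2^m\right|!}$ equals the Lebesgue measure of $Q_{k_1,\dots,k_{2n}} \cap \triangle_{2n}$. More usefully, since $c_{i,j} = \int_{\mathbb{R}^2} f\, 1_{\left[t_{i-1}^m,t_i^m\right]\times\left[t_{j-1}^m,t_j^m\right]}$, the product $\Pi_{(l,j)\in\pi}\left|c_{k_l,k_j}\right|$ can be majorised, after summing over the ordered tuples, by an iterated integral: summing $\Pi_{(l,j)\in\pi}\left|c_{k_l,k_j}\right|$ over all tuples with $k_i = k_{i+1}$ is bounded by $\int_{A_m} \Pi_{(l,j)\in\pi} g(x_l, x_j)\, dx_1\cdots dx_{2n}$, where $g(x,y)$ is a piecewise-constant (in each dyadic cell, the average of $\left|f\right|$) approximation to $\left|f\right|$ and $A_m = \{(x_1,\dots,x_{2n}) : x_i, x_{i+1}$ lie in the same dyadic interval of $D^m\}$. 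Actually it is cleaner to bound directly: $\left|c_{k_l,k_j}\right| \leq \int \left|f(x_l,x_j)\right| 1_{\text{cells}}$, and the constraint $k_i = k_{i+1}$ forces $(x_i, x_{i+1})$ into $\bigcup_{a=1}^{2^m}\left[t_{a-1}^m, t_a^m\right]^2$, a set of measure $T^2/2^m \to 0$.

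The main step is then: after expanding the product of the $\left|c\right|$'s as a single integral over $\left[0,T\right]^{2n}$ of $\Pi_{(l,j)\in\pi}\left|f(x_{\sigma(l)},x_{\sigma(j)})\right|$ times a product of cell indicators (with appropriate relabelling), restrict to the subregion where the $x_i$ and $x_{i+1}$ coordinates are constrained to a common dyadic interval. One must check that the full unconstrained integral $\int_{\left[0,T\right]^{2n}} \Pi_{(l,j)\in\pi}\left|f(x_l,x_j)\right|\, dx_1\cdots dx_{2n}$ is finite — this follows because the pairing $\pi$ partitions $\{1,\dots,2n\}$ into $n$ pairs, so by Fubini the integral factorises into a product of $n$ copies of $\int_{\left[0,T\right]^2}\left|f\right| < \infty$. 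Given finiteness of the dominating integral, and that the constraint region has $2n$-dimensional Lebesgue measure tending to $0$ (it is contained in $\{|x_i - x_{i+1}| \leq T/2^m\}$ intersected with the cube, or more crudely in a slab of width $T \cdot 2^{-m}$ in one coordinate difference), absolute continuity of the Lebesgue integral of the fixed $L^1$ function $\Pi\left|f\right|$ gives that the constrained integral $\to 0$.

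The main obstacle I anticipate is the bookkeeping that translates the discrete sum $\sum_{1\leq k_1 \leq \dots}$ with the factorial normalisation into a clean integral bound — one has to be careful that dropping the $\triangle_{2n}$ ordering constraint (replacing it by the cube measure, which overcounts by at most a constant) and reinserting the dyadic-cell structure is done consistently, and that the map from $(k_l, k_j)$ pairs to integration variables respects the pairing $\pi$. Once the sum is dominated by $\int_{E_m} h\, d\mathrm{Leb}$ with $h \in L^1$ fixed and $\mathrm{Leb}(E_m) \to 0$, the conclusion is immediate from absolute continuity of the integral; no further estimate on $f$ beyond integrability is needed, which is consistent with the hypotheses available from Lemma \ref{lem:fund}.
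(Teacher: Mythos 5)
Your argument is correct, but it is organised differently from the paper's. The paper proves the lemma by a case analysis on whether the coincident pair $\left(i,i+1\right)$ is itself an element of $\pi$: if it is, the factor $\sum_{k}\left|c_{k,k}\right|$ is the integral of $\left|f\right|$ over the union of diagonal dyadic squares and tends to zero while every other factor is bounded by $\int_{\left[0,T\right]^{2}}\left|f\right|$; if it is not, the two factors sharing the repeated index are controlled by introducing the marginal $F\left(v\right)=\int_{0}^{T}\left|f\left(u,v\right)\right|\mathrm{d}u$ and reducing again to an integral of the $L^{1}$ function $F\otimes F$ over the shrinking diagonal union. You instead lift the entire constrained sum into a single integral over $\left[0,T\right]^{2n}$ of $\Pi_{\left(l,j\right)\in\pi}\left|f\left(u_{l},u_{j}\right)\right|$ restricted to the region where $u_{i}$ and $u_{i+1}$ share a dyadic cell; since $\pi$ is a perfect matching, each variable occurs in exactly one factor, so Tonelli factorises the unrestricted integral into $\left(\left\Vert f\right\Vert _{L^{1}}\right)^{n}<\infty$, and absolute continuity of the integral of this fixed $L^{1}$ function over a region of measure $O\left(2^{-m}\right)$ finishes the proof. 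The two arguments rest on the same two facts ($f\in L^{1}$ and the diagonal union shrinking to a null set), but yours avoids the case split entirely and subsumes the paper's auxiliary function $F$ into the Tonelli factorisation; the paper's version stays at the level of two-dimensional integrals and iterated sums, which is more elementary but requires the separate treatment of the two configurations of $\pi$. One small point of hygiene: the factorial weights you discuss at the start are not present in the statement of this lemma (they are $\geq1$ in any case, so discarding them only strengthens the bound), and your final "bound directly" route correctly ignores them.
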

\begin{proof}
If $\left(i,i+1\right)\in\pi$, then 
\begin{equation}
\begin{array}{cc}
 & \sum_{1\leq k_{1}\leq\ldots\leq k_{i}=k_{i+1}\leq k_{i+2}\ldots\leq k_{2n}\leq2^{m}}\Pi_{\left(j,l\right)\in\pi}\left|c_{k_{l},k_{j}}\right|\\
\leq & \sum_{k=1}^{2^{m}}\left|c_{k,k}\right|\times\Pi_{\left(l,j\right)\in\pi\backslash\left(i,i+1\right)}\sum_{k_{j}=1}^{2^{m}}\sum_{k_{l}=1}^{k_{j}}\left|c_{k_{l},k_{j}}\right|
\end{array}\label{eq:first}
\end{equation}

Note that for $\left(j,l\right)\neq\left(i,i+1\right)$, 
\begin{equation}
\begin{array}{cc}
 & \sum_{k_{j}=1}^{2^{m}}\sum_{k_{l}=1}^{k_{j}}\left|c_{k_{l}k_{j}}\right|\\
= & \sum_{k_{j}=1}^{2^{m}}\sum_{k_{l}=1}^{k_{j}}\int_{\mathbb{R}^{2}}\left|f\left(u,v\right)\right|1_{\left[t_{k_{l-1}}^{m},t_{k_{l}}^{m}\right]\times\left[t_{k_{j}-1}^{m},t_{k_{j}}^{m}\right]}\mathrm{d}u\mathrm{d}v\\
\leq & \int_{\left[0,T\right]\times\left[0,T\right]}\left|f\left(u,v\right)\right|\mathrm{d}u\mathrm{d}v
\end{array}\label{eq:2}
\end{equation}

and that 
\begin{eqnarray*}
\sum_{k=1}^{2^{m}}\left|c_{k,k}\right| & \leq & \sum_{k=1}^{2^{m}}\int_{\mathbb{R}^{2}}\left|f\left(u,v\right)\right|1_{\left[t_{k}^{m},t_{k-1}^{m}\right]\times\left[t_{k}^{m},t_{k-1}^{m}\right]}\mathrm{d}u\mathrm{d}v\\
 & = & \int_{\mathbb{R}^{2}}\left|f\left(u,v\right)\right|1_{\cup_{k=1}^{2^{m}}\left[t_{k}^{m},t_{k-1}^{m}\right]\times\left[t_{k}^{m},t_{k-1}^{m}\right]}\left(u,v\right)\mathrm{d}u\mathrm{d}v.
\end{eqnarray*}

Since $f\left(\cdot,\cdot\right)\in L^{1}\left(\left[0,T\right]^{2}\right)$
and the set $\cup_{k=1}^{2^{m}}\left[t_{k}^{m},t_{k-1}^{m}\right]\times\left[t_{k}^{m},t_{k-1}^{m}\right]$
converges to a null set in $\mathbb{R}^{2}$ as $m\rightarrow\infty$,
we have 
\[
\sum_{k=1}^{2^{m}}\left|c_{k,k}\right|\rightarrow0
\]
as $m\rightarrow\infty$. 

Thus by (\ref{eq:first}) we have 
\[
\begin{array}{cc}
 & \sum_{1\leq k_{1}\leq\ldots\leq k_{i}=k_{i+1}\leq k_{i+2}\dots\leq k_{2n}\leq2^{m}}\Pi_{\left(j,l\right)\in\pi}\left|c_{k_{l}k_{j}}\right|\\
\leq & \left[\int_{\left[0,T\right]\times\left[0,T\right]}\left|f\left(u,v\right)\right|\mathrm{d}u\mathrm{d}v\right]^{n-1}\int_{\cup_{k=1}^{2^{m}}\left[t_{k}^{m},t_{k-1}^{m}\right]\times\left[t_{k}^{m},t_{k-1}^{m}\right]}\left|f\left(u,v\right)\right|\mathrm{d}u\mathrm{d}v\\
 & \rightarrow0\;\;\mbox{as}\;\;\; m\rightarrow\infty.
\end{array}
\]

Now if $\left(i,i+1\right)\notin\pi$, then let $\pi\left(i\right),\pi\left(i+1\right)$
denote the unique integers satisfying $\left(i,\pi\left(i\right)\right),\left(i+1,\pi\left(i+1\right)\right)\in\pi$. 

Assume now that $\left(i,i+1\right)\notin\pi$, then we have 
\begin{equation}
\begin{array}{cc}
 & \sum_{1\leq k_{1}\leq\ldots\leq k_{i}=k_{i+1}\leq k_{i+2}\ldots\leq k_{2n}\leq2^{m}}\Pi_{\left(l,j\right)\in\pi}\left|c_{k_{l}k_{j}}\right|\\
\leq & \Pi_{\left(l,j\right)\in\pi\backslash\left\{ \left(i,\pi\left(i\right)\right),\left(i+1,\pi\left(i+1\right)\right)\right\} }\sum_{k_{j}=1}^{2^{m}}\sum_{k_{l}=1}^{k_{j}}\left|c_{k_{l}k_{j}}\right|\\
 & \times\sum_{k=1}^{2^{m}}\sum_{k_{\pi\left(i\right)}=1}^{2^{m}}\sum_{k_{\pi\left(i+1\right)}=1}^{2^{m}}\left|c_{k,k_{\pi\left(i\right)}}\right|\left|c_{k,k_{\pi\left(i+1\right)}}\right|.
\end{array}\label{eq:3}
\end{equation}

Let $F\left(v\right):=\int_{0}^{T}\left|f\left(u,v\right)\right|\mathrm{d}u$.
Then as $f\left(\cdot,\cdot\right)\in L^{1}\left[0,T\right]^{2}$,
we have $F\left(\cdot\right)\in L^{1}\left[0,T\right]$. Thus 
\[
\sum_{k_{\pi\left(i\right)}=1}^{2^{m}}\int_{\mathbb{R}^{2}}\left|f\left(u,v\right)\right|1_{\left[t_{k_{\pi\left(i\right)}}^{m},t_{k_{\pi\left(i\right)}-1}^{m}\right]\times\left[t_{k}^{m},t_{k-1}^{m}\right]}\mathrm{d}u\mathrm{d}v=\int_{0}^{T}F\left(v\right)1_{\left[t_{k}^{m},t_{k-1}^{m}\right]}\left(v\right)\mathrm{d}v.
\]

Note that since $F\geq0$, and that $F\left(\cdot\right)\in L^{1}$,
the integral 
\[
\int_{0}^{T}\int_{0}^{T}F\left(u\right)F\left(v\right)\mathrm{d}u\mathrm{d}v=\int_{0}^{T}F\left(u\right)\mathrm{d}u\int_{0}^{T}F\left(v\right)\mathrm{d}v
\]
exist and thus $\left(u,v\right)\rightarrow F\left(u\right)F\left(v\right)$
is integrable on $\left[0,T\right]^{2}$. 

Hence

\begin{eqnarray*}
 &  & \sum_{k=1}^{2^{m}}\sum_{k_{\pi\left(i\right)}=1}^{2^{m}}\sum_{k_{\pi\left(i+1\right)}=1}^{2^{m}}\left|c_{k,k_{\pi\left(i\right)}}\right|\left|c_{k,\pi\left(i+1\right)}\right|\\
 & \leq & \sum_{k=1}^{2^{m}}\left\{ \left[\sum_{k_{\pi\left(i\right)}=1}^{2^{m}}\int_{\mathbb{R}^{2}}\left|f\left(u,v\right)\right|1_{\left[t_{k_{\pi\left(i\right)}}^{m},t_{k_{\pi\left(i\right)}-1}^{m}\right]\times\left[t_{k}^{m},t_{k-1}^{m}\right]}\mathrm{d}u\mathrm{d}v\right]\right.\\
 &  & \left.\times\left[\sum_{k_{\pi\left(i+1\right)}=1}^{2^{m}}\int_{\mathbb{R}^{2}}\left|f\left(u,v\right)\right|1_{\left[t_{k_{\pi\left(i+1\right)}}^{m},t_{k_{\pi\left(i+1\right)}-1}^{m}\right]\times\left[t_{k}^{m},t_{k-1}^{m}\right]}\mathrm{d}u\mathrm{d}v\right]\right\} \\
 & \leq & \sum_{k=1}^{2^{m}}\int_{\mathbb{R}^{2}}F\left(v\right)F\left(u\right)1_{\left[t_{k}^{m},t_{k-1}^{m}\right]\times\left[t_{k}^{m},t_{k-1}^{m}\right]}\left(u,v\right)\mathrm{d}u\mathrm{d}v.
\end{eqnarray*}

As $\left(u,v\right)\rightarrow F\left(u\right)F\left(v\right)$ is
integrable on $L^{1}\left[0,T\right]^{2}$, and the set $\cup_{k=1}^{2^{m}}\left[t_{k}^{m},t_{k-1}^{m}\right]\times\left[t_{k}^{m},t_{k-1}^{m}\right]$
converges to a null set in $\mathbb{R}^{2}$, so that 
\[
\sum_{k=1}^{2^{m}}\int_{\mathbb{R}^{2}}F\left(v\right)F\left(u\right)1_{\left[t_{k}^{m},t_{k-1}^{m}\right]\times\left[t_{k}^{m},t_{k-1}^{m}\right]}\left(u,v\right)\mathrm{d}u\mathrm{d}v\rightarrow0,
\]

as $m\rightarrow\infty$. 

Together with (\ref{eq:2}) and (\ref{eq:3}) we have 
\begin{eqnarray*}
 &  & \sum_{1\leq k_{1}\leq\ldots\leq k_{i}=k_{i+1}\leq k_{i+2}\ldots\leq k_{2n}\leq2^{m}}\Pi_{\left(l,j\right)\in\pi}\left|c_{k_{l}k_{j}}\right|\\
 & \leq & \left(\sum_{k=1}^{2^{m}}\int_{\mathbb{R}^{2}}F\left(v\right)F\left(u\right)1_{\left[t_{k}^{m},t_{k-1}^{m}\right]\times\left[t_{k}^{m},t_{k-1}^{m}\right]}\left(u,v\right)\mathrm{d}u\mathrm{d}v\right)\\
 &  & \times\left(\int_{\left[0,T\right]\times\left[0,T\right]}\left|f\left(u,v\right)\right|\mathrm{d}u\mathrm{d}v\right)^{n-2}\\
 & \rightarrow & 0,
\end{eqnarray*}

as $m\rightarrow\infty$. 

This completes the proof of the lemma.
\end{proof}
We now prove our main result Theorem \ref{thm:gen}. 
\begin{proof}
(of Theorem \ref{thm:gen}) By Proposition \ref{prop:limit}, the
expected signature is given by the limit as $m\rightarrow\infty$
of the sum in Lemma \ref{lem:cal}. 

However, by Lemma \ref{lem:3 dia}, for any $i\geq1$, as $m\rightarrow\infty$,
\[
\sum_{1\leq k_{1}\leq\ldots\leq k_{i}=k_{i+1}\leq k_{i+2}\ldots\leq k_{2n}\leq2^{m}}\frac{1}{\left|\#k=1\right|!\ldots\left|\#k=2^{m}\right|!}\cdot\Pi_{\left(l,j\right)\in\pi}\left|c_{k_{l},k_{j}}\right|\rightarrow0
\]

Thus 
\[
\begin{array}{cc}
 & \lim_{m\rightarrow\infty}\sum_{1\leq k_{1}\leq\ldots\leq k_{2n}\leq2^{m}}\frac{1}{\left|\#k=1\right|!\ldots\left|\#k=2^{m}\right|!}\cdot\Pi_{\left(j,l\right)\in\pi}c_{k_{l}k_{j}}\\
= & \lim_{m\rightarrow\infty}\sum_{1<k_{1}<\ldots<k_{2n}\leq2^{m}}\frac{1}{\left|\#k=1\right|!\ldots\left|\#k=2^{m}\right|!}\Pi_{\left(j,l\right)\in\pi}c_{k_{l}k_{j}}\\
= & \lim_{m\rightarrow\infty}\sum_{1<k_{1}<\ldots<k_{2n}\leq2^{m}}\Pi_{\left(j,l\right)\in\pi}c_{k_{l}k_{j}}
\end{array}
\]
where the last equality uses the fact that for $k_{1}<k_{2}<\ldots<k_{2N}$,
we have $\left|\#k=1\right|!\ldots\left|\#k=2^{m}\right|!=1$. 

Note that 
\[
\begin{array}{cc}
 & \Pi_{\left(j,l\right)\in\pi}c_{k_{l}k_{j}}\\
= & \Pi_{\left(l,j\right)\in\pi}\int_{\mathbb{R}^{2}}f\left(u,v\right)1_{\left[t_{k_{l}}^{m},t_{k_{l}-1}^{m}\right]\times\left[t_{k_{j}}^{m},t_{k_{j}-1}^{m}\right]}\mathrm{d}u\mathrm{d}v\\
= & \int_{\mathbb{R}^{2}}\Pi_{\left(j,l\right)\in\pi}f\left(u_{j},u_{l}\right)1_{\left[t_{k_{1}-1}^{m},t_{k_{1}}^{m}\right]\times\ldots\times\left[t_{k_{2n}-1}^{m},t_{k_{2n}}^{m}\right]}\left(u_{1},\ldots,u_{2n}\right)du_{1}...du_{2n}.
\end{array}
\]

Thus 
\begin{eqnarray*}
 &  & \lim_{m\rightarrow\infty}\sum_{1<k_{1}<k_{2}<\ldots<k_{2n}\leq2^{m}}\Pi_{\left(j,l\right)\in\pi}c_{k_{l}k_{j}}\\
 & = & \lim_{m\rightarrow\infty}\sum_{1<k_{1}<...<k_{2n}\leq2^{m}}\int_{\left[t_{k_{1}-1}^{m},t_{k_{1}}^{m}\right]\times\ldots\times\left[t_{k_{2n}-1}^{m},t_{k_{2n}}^{m}\right]}\Pi_{\left(j,l\right)\in\pi}f\left(u_{j},u_{l}\right)du_{1}\ldots du_{2n}.
\end{eqnarray*}

Let $\triangle_{2n}\left(0,T\right)$ denote the simplex $\left\{ \left(u_{1},..,u_{2n}\right)\in\mathbb{R}^{2n}:0\leq u_{1}<\ldots<u_{2n}\leq T\right\} $.
Then, 
\[
\begin{array}{cc}
 & \int_{\left[t_{k_{1}-1}^{m},t_{k_{1}}^{m}\right]\times...\times\left[t_{k_{2n}-1}^{m},t_{k_{2n}}^{m}\right]\cap\triangle_{2N}\left(0,T\right)}\begin{array}{c}
\Pi_{\left(l,j\right)\in\pi}\left|f\left(u_{j},u_{l}\right)\right|du_{1}\ldots du_{2n}\end{array}\\
\leq & \int_{\left[t_{k_{1}-1}^{m},t_{k_{1}}^{m}\right]\times...\times\left[t_{k_{2n}-1}^{m},t_{k_{2n}}^{m}\right]}\begin{array}{c}
\Pi_{\left(l,j\right)\in\pi}\left|f\left(u_{j},u_{l}\right)\right|du_{1}\ldots du_{2n}\end{array}\\
\leq & \Pi_{\left(j,l\right)\in\pi}\left|c_{k_{l}k_{j}}\right|.
\end{array}
\]

However, by Lemma \ref{lem:3 dia}, 
\[
\sum_{1\leq k_{1}\leq..\leq k_{i}=k_{i+1}\leq k_{i+2}\ldots\leq k_{2n}\leq2^{m}}\Pi_{\left(j,l\right)\in\pi}\left|c_{k_{l}k_{j}}\right|\rightarrow0\;\mbox{as}\; m\rightarrow\infty.
\]

Thus for any $1\leq i$, as $m\rightarrow\infty$, 
\begin{align*}
 & \sum_{1\leq k_{1}\leq..\leq k_{i}=k_{i+1}\leq k_{i+2}\ldots\leq k_{2n}\leq2^{m}}\int_{\left[t_{k_{1}-1}^{m},t_{k_{1}}^{m}\right]\times\ldots\times\left[t_{k_{2n}-1}^{m},t_{k_{2n}}^{m}\right]\cap\triangle_{2N}\left(0,T\right)}\\
 & \begin{array}{c}
\Pi_{\left(l,j\right)\in\pi}\left|f\left(u_{j},u_{l}\right)\right|du_{1}\ldots du_{2n}\end{array}
\end{align*}
converges to zero. 

Hence, 
\begin{eqnarray*}
 &  & \lim_{m\rightarrow\infty}\sum_{1<k_{1}<...<k_{2n}\leq2^{m}}\int_{\left[t_{k_{1}}^{m},t_{k_{1}-1}^{m}\right]\times\ldots\times\left[t_{k_{2n}}^{m},t_{k_{2n}-1}^{m}\right]}\Pi_{\left(j,l\right)\in\pi}f\left(u_{j},u_{l}\right)du_{1}\ldots du_{2n}\\
 & = & \lim_{m\rightarrow\infty}\sum_{1\leq k_{1}\leq...\leq k_{2n}\leq2^{m}}\int_{\left[t_{k_{1}}^{m},t_{k_{1}-1}^{m}\right]\times\ldots\times\left[t_{k_{2n}}^{m},t_{k_{2n}-1}^{m}\right]\cap\triangle_{2N}\left(0,T\right)}\Pi_{\left(j,l\right)\in\pi}f\left(u_{j},u_{l}\right)du_{1}\ldots du_{2n}\\
 & = & \int_{\triangle_{2n}}\begin{array}{c}
\Pi_{\left(l,j\right)\in\pi}f\left(u_{j},u_{l}\right)du_{1}\ldots du_{2n}\end{array}.
\end{eqnarray*}

Finally by Lemma \ref{lem:cal} and Lemma \ref{prop:limit}, 
\begin{eqnarray*}
 &  & \pi_{i_{1},..,i_{2n}}\left(\mathbb{E}\left(S\left(W\right)\right)_{0,T}\right)\\
 & = & \lim_{m\rightarrow\infty}\pi_{i_{1},..,i_{2n}}\left(\mathbb{E}\left(S\left(W\left(m\right)\right)\right)_{0,T}\right)\\
 & = & \sum_{\pi\in\Pi_{i_{1},...,i_{2n}}}\int_{\triangle_{2N}}\begin{array}{c}
\Pi_{\left(l,j\right)\in\pi}f\left(u_{j},u_{l}\right)du_{1}\ldots du_{2n}\end{array}.
\end{eqnarray*}

\end{proof}

\section{The fractional Brownian motion with Hurst parameter $H>\frac{1}{2}$}

We will now show that the formula we give coincide with the following
formula for the expected signature of fractional Brownian motion with
Hurst parameter $H>\frac{1}{2}$ calculated in \cite{Fabric}.
\begin{prop}
(See \cite{Fabric}, Theorem 31 )\label{prop:main}Let $H>\frac{1}{2}$.
If $k=2n$ for some $n\in\mathbb{N}$, and $\left(i_{1},\ldots,i_{2n}\right)\in E_{2n}$,
then the projection to the basis $e_{i_{1}}\otimes e_{i_{2}}\ldots\otimes e_{i_{k}}$
of the expected signature of fractional Brownian Motion with Hurst
parameter $H$ up to time $T$ is

\[
\sum_{\pi\in\Pi_{i_{1},...,i_{2n}}}\left(H\left(2H-1\right)T^{H}\right)^{n}\int_{\triangle_{2n}\left(1\right)}\Pi_{\left(l,j\right)\in\pi}\left(u_{j}-u_{l}\right)^{2H-2}du_{1}...du_{2n}
\]
where $\triangle_{2n}\left(1\right)$ denotes the simplex $\left\{ \left(u_{1},\ldots,u_{2n}\right)\in\mathbb{R}^{2n}:0\leq u_{1}<\ldots<u_{2n}\leq1\right\} $.
The projection to the basis $e_{i_{1}}\otimes e_{i_{2}}\ldots\otimes e_{i_{k}}$
is zero otherwise.\end{prop}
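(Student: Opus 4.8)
The plan is to exhibit fractional Brownian motion with $H>\frac{1}{2}$ as a Gaussian process with a strictly regular kernel, read off the associated function $f$ of Lemma~\ref{lem:fund}, substitute it into Theorem~\ref{thm:gen}, and rescale the simplex.

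First I would recall that fractional Brownian motion with $H>\frac{1}{2}$ admits the Molchan--Golosov representation $W_t=\int_0^t K_H(t,s)\,\mathrm{d}B_s$, with
\[
K_H(t,s):=c_H\,s^{\frac{1}{2}-H}\int_s^t (u-s)^{H-\frac{3}{2}}u^{H-\frac{1}{2}}\,\mathrm{d}u,\qquad 0\le s<t\le T,
\]
for a suitable constant $c_H>0$. One checks that $K_H$ satisfies (K1)--(K3): since $H-\frac{3}{2}>-1$, the integrand is in $L^1(s,T]$, so $K_H(\cdot,s)$ is absolutely continuous on $(s,T]$ with nonnegative derivative $\partial_1 K_H(t,s)=c_H\,s^{\frac{1}{2}-H}(t-s)^{H-\frac{3}{2}}t^{H-\frac{1}{2}}$; the defining integral degenerates at $t=s$, giving $K_H(s^+,s)=0$; and, $K_H(\cdot,s)$ being nondecreasing, $|K_H|((s,T],s)=K_H(T,s)$, so that $\int_0^T |K_H|((s,T],s)^2\,\mathrm{d}s=\int_0^T K_H(T,s)^2\,\mathrm{d}s=\mathbb{E}[W_T^2]=T^{2H}<\infty$. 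Hence Lemma~\ref{lem:fund}, Proposition~\ref{prop:limit} and Theorem~\ref{thm:gen} all apply. (Alternatively, one may avoid the kernel entirely: the proofs of Proposition~\ref{prop:limit} and Theorem~\ref{thm:gen} used only the conclusion of Lemma~\ref{lem:fund} together with finite $1$-variation of the covariance, both of which are checked directly below.)

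Next I would identify $f$. The covariance of fractional Brownian motion is $R(t,s)=\frac{1}{2}\big(t^{2H}+s^{2H}-|t-s|^{2H}\big)$, and Lemma~\ref{lem:fund} (taking $s=\sigma=0$, using $W_0=0$) forces $R(t,s)=\int_0^t\int_0^s f(u,v)\,\mathrm{d}u\,\mathrm{d}v$, so $f$ is the a.e.\ mixed partial derivative of $R$. Differentiating once in each variable gives, for $u\neq v$,
\[
f(u,v)=H(2H-1)\,|u-v|^{2H-2},
\]
which is symmetric and, since $2H-2>-1$, integrable on $[0,T]^2$. Substituting into Theorem~\ref{thm:gen} (whose inner integrand $\int_0^{u_j\wedge u_l}\partial_1 K(u_j,r)\partial_1 K(u_l,r)\,\mathrm{d}r$ is precisely $f(u_j,u_l)$), and noting that on $\triangle_{2n}(T)$ one has $u_l<u_j$ for every pair $(l,j)\in\pi$ with $l<j$, so $|u_j-u_l|=u_j-u_l$, the constant $H(2H-1)$ factors out of each of the $n$ pairings to yield
\[
\sum_{\pi\in\Pi_{i_1,\ldots,i_{2n}}}\big(H(2H-1)\big)^n\int_{\triangle_{2n}(T)}\Pi_{(l,j)\in\pi}(u_j-u_l)^{2H-2}\,\mathrm{d}u_1\cdots\mathrm{d}u_{2n}.
\]
A final change of variables $u_k=T v_k$ carries $\triangle_{2n}(T)$ onto $\triangle_{2n}(1)$, contributes the Jacobian, and rescales each of the $n$ factors via $(u_j-u_l)^{2H-2}=T^{2H-2}(v_j-v_l)^{2H-2}$; collecting the resulting power of $T$ gives the stated expression, and the vanishing of the projection outside $E_{2n}$ is inherited verbatim from Theorem~\ref{thm:gen}. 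The only slightly delicate step is the verification that the Molchan--Golosov kernel meets (K1)--(K3) (equivalently, the integrability of the explicit $f$ and the finite $1$-variation of $R$); the rest is bookkeeping together with a scaling.
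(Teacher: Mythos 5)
Your proposal follows essentially the same route as the paper: both exhibit fractional Brownian motion via the Molchan--Golosov kernel $K_{H}$, verify (K1)--(K3) (you via the It\^o isometry $\int_{0}^{T}K_{H}(T,s)^{2}\mathrm{d}s=T^{2H}$, the paper via an explicit pointwise bound on $K_{H}$), and then invoke Theorem \ref{thm:gen} followed by the scaling $u_{k}=Tv_{k}$, with your write-up merely making explicit the identification $f(u,v)=H(2H-1)\left|u-v\right|^{2H-2}$ that the paper compresses into ``follows from a change of variable.'' One small remark: your scaling correctly produces the factor $T^{2Hn}=\left(T^{2H}\right)^{n}$ rather than the $\left(T^{H}\right)^{n}$ printed in the statement, so the discrepancy you silently absorb when claiming to recover ``the stated expression'' is a typo in the proposition (as the case $n=1$, where the answer must be $\frac{1}{2}T^{2H}$, confirms), not an error in your computation.
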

\begin{proof}
(of Proposition \ref{prop:main})For $H>\frac{1}{2}$, let $K_{H}\left(t,s\right)$
be defined, for $t>s$, as 
\[
K_{H}\left(t,s\right)=c_{H}s^{\frac{1}{2}-H}\int_{s}^{t}\left|u-s\right|^{H-\frac{3}{2}}u^{H-\frac{1}{2}}\mathrm{d}u
\]
where $c_{H}=\left[\frac{H\left(2H-1\right)}{\beta\left(2-2H,H-\frac{1}{2}\right)}\right]^{\frac{1}{2}}$
and $\beta$ denotes the beta function. 

Then by \cite{NVV99}, the fractional Brownian motion $B_{\cdot}^{H}$
can be represented as 
\[
B_{t}^{H}=\int_{0}^{t}K_{H}\left(t,s\right)\mathrm{d}B_{s}
\]
where $\mathrm{d}B_{s}$ denotes integration in the sense of Itô.

Note that $K\left(s^{+},s\right)=0$ for all $s$, $K\left(\cdot,s\right)$
is differentiable and has a positive, integrable derivative, and thus
$K_{H}$ satisfies (K1) and (K2). 

Now note
\begin{eqnarray*}
K_{H}\left(t,s\right) & \leq & c_{H}s^{\frac{1}{2}-H}t^{H-\frac{1}{2}}\int_{s}^{t}\left|u-s\right|^{H-\frac{3}{2}}\mathrm{d}u\\
 & \le & \frac{c_{H}}{H-\frac{1}{2}}s^{\frac{1}{2}-H}t^{H-\frac{1}{2}}\left(t-s\right)^{H-\frac{1}{2}}\\
 & \leq & \frac{c_{H}}{H-\frac{1}{2}}s^{\frac{1}{2}-H}t^{2H-1}
\end{eqnarray*}

and thus 
\[
K_{H}\left(t,s\right)^{2}\leq\left[\frac{c_{H}}{H-\frac{1}{2}}\right]^{2}s^{1-2H}t^{4H-2}
\]

and the right hand side is integrable in $s$.

As $K\left(\cdot,s\right)$ is increasing and $K\left(s^{+},s\right)=0$,
$\left|K\right|\left((s,t],s\right)=K\left(t,s\right)-K\left(s^{+},s\right)=K\left(t,s\right)$
and (K3) is satisfied. 

Therefore, Theorem \ref{thm:gen} applies and Proposition \ref{prop:main}
follows from a change of variable. 
\end{proof}

\section{Right continuity at $H=\frac{1}{2}$}

We shall prove that the formula for the expected signature in Propositon
\ref{prop:main} reconciles with the expected signature of Brownian
motion when we take limit as $H\rightarrow\frac{1}{2}$. By the self-similarity
property of fractional Brownian motions, it is sufficient to establish
this continuity in the case $T=1$. First we recall the expected signature
of Brownian motion up to time $1$:
\begin{prop}
(\cite{Faw03},\cite{LV04})The $n^{th}$ level term of the expected
signature of Brownian motion up to time $1$ is 
\[
\frac{1}{2^{n}n!}\left(\sum_{i=1}^{d}e_{i}\otimes e_{i}\right)^{n}
\]

\end{prop}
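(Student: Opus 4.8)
The plan is to run the same scheme as in the proof of Theorem~\ref{thm:gen}: compute the expected signature of the dyadic piecewise-linear approximation via Wick's formula, then let $m\to\infty$. Brownian motion lies just outside the hypotheses of Theorem~\ref{thm:gen} (its kernel $K\equiv1$ violates (K2)), so that theorem cannot be quoted; but the limiting step is still available, because $R_{B}(s,t)=(s\wedge t)I_{d}$ has finite $1$-variation on $[0,1]^{2}$ — it is controlled by $\omega([s,t]\times[u,v])=\mathrm{Leb}([s,t]\cap[u,v])$ — and $\sup_{0\le t\le1}\bigl|B_{t}^{D^{m}}-B_{t}\bigr|_{L^{2}}\to0$ since each dyadic increment has variance $2^{-m}$. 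Hence Theorem~\ref{thm:aproxlem} applies and, exactly as in the proof of Proposition~\ref{prop:limit}, $\mathbb{E}\bigl[S_{N}(B)_{0,1}\bigr]=\lim_{m\to\infty}\mathbb{E}\bigl[S_{N}(B^{D^{m}})_{0,1}\bigr]$ for every $N$.

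First I would compute $\mathbb{E}[S(B^{D^{m}})_{0,1}]$. As in the proof of Lemma~\ref{lem:cal}, the signature of a linear segment is the tensor exponential of its increment, so by Chen's identity $S(B^{D^{m}})_{0,1}=\bigotimes_{k=1}^{2^{m}}\exp\!\bigl(\sum_{i}(B_{t_{k}^{m}}^{i}-B_{t_{k-1}^{m}}^{i})e_{i}\bigr)$; since Brownian increments are independent across $k$ and expectation distributes over tensor products of independent factors,
\[
\mathbb{E}[S(B^{D^{m}})_{0,1}]=\bigotimes_{k=1}^{2^{m}}\mathbb{E}\Bigl[\exp\bigl(\textstyle\sum_{i}(B_{t_{k}^{m}}^{i}-B_{t_{k-1}^{m}}^{i})e_{i}\bigr)\Bigr].
\]
Expanding $\exp(v)=\sum_{p\ge0}\frac{1}{p!}v^{\otimes p}$ and applying Wick's formula to the independent $N(0,2^{-m})$ components of one increment, the odd-degree part of each factor vanishes, the degree-$2$ part is $a_{m}:=\frac{2^{-m}}{2}\sum_{i}e_{i}\otimes e_{i}$, and the degree-$2p$ part for $p\ge2$ has all entries $O(2^{-pm})$ (the same pairing count as in Lemma~\ref{lem:cal}). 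Thus each factor equals $1+a_{m}+R_{m}$ with $\pi_{j}(R_{m})=0$ for $j\le3$ and $\pi_{2p}(R_{m})=O(2^{-pm})$ for $p\ge2$.

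Next I would expand $\bigotimes_{k=1}^{2^{m}}(1+a_{m}+R_{m})$ and project onto level $N$. The terms with no $R_{m}$ give $(1+a_{m})^{\otimes2^{m}}=\sum_{j\ge0}\binom{2^{m}}{j}a_{m}^{\otimes j}$, and since $\binom{2^{m}}{j}(2^{-m}/2)^{j}\to\frac{1}{2^{j}j!}$ this converges, level by level, to $\sum_{j\ge0}\frac{1}{2^{j}j!}\bigl(\sum_{i}e_{i}\otimes e_{i}\bigr)^{\otimes j}=\exp\!\bigl(\frac12\sum_{i}e_{i}\otimes e_{i}\bigr)$. Any term with at least one $R_{m}$ is negligible: at level $N$, a term built from $s\ge1$ factors $R_{m}$ and $r$ factors $a_{m}$ satisfies $2s+r\le N/2$, and its size is at most $C\binom{2^{m}}{s}\binom{2^{m}-s}{r}(2^{-2m})^{s}(2^{-m})^{r}\le C'2^{m(s+r)-2ms-mr}=C'2^{-ms}\to0$; only finitely many such terms occur at level $N$, so their total vanishes. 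Hence $\mathbb{E}[S(B)_{0,1}]=\exp\!\bigl(\frac12\sum_{i=1}^{d}e_{i}\otimes e_{i}\bigr)$, whose odd levels vanish and whose level-$2n$ part is $\frac{1}{2^{n}n!}\bigl(\sum_{i=1}^{d}e_{i}\otimes e_{i}\bigr)^{n}$, as asserted.

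The step needing the most care — still routine — is the bookkeeping in the last paragraph: showing that at each fixed level only the $R_{m}$-free contribution survives, and that the limit is genuinely taken level by level (which is where Theorem~\ref{thm:aproxlem} and the argument of Proposition~\ref{prop:limit} enter). A shorter alternative bypasses the approximation entirely: with $\Phi(t):=\mathbb{E}[S(B)_{0,t}]$, the Stratonovich-to-It\^o correction turns the signature's defining equation into $\Phi(t)=1+\frac12\int_{0}^{t}\Phi(s)\otimes\bigl(\sum_{i}e_{i}\otimes e_{i}\bigr)\,\mathrm{d}s$, so $\Phi(1)=\exp\!\bigl(\frac12\sum_{i}e_{i}\otimes e_{i}\bigr)$; but this uses the Stratonovich signature SDE, which is not among the facts recalled here, so the approximation argument is the more self-contained route.
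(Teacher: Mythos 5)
The paper does not prove this proposition at all: it is quoted from \cite{Faw03} and \cite{LV04} and used as a black box in Section 6, so any proof you give is necessarily a different route from ``the paper's.'' Your argument is correct and is, in effect, a self-contained specialisation of the paper's own scheme (Lemma \ref{lem:cal} plus the limiting argument of Proposition \ref{prop:limit}) to Brownian motion. Two points in your write-up are worth highlighting as genuinely right and non-cosmetic. First, you correctly observe that Theorem \ref{thm:gen} cannot simply be quoted because $K\equiv1$ violates (K2), but that Theorem \ref{thm:aproxlem} still applies since $R_{B}\left(\begin{array}{c}s,t\\u,v\end{array}\right)=\mathrm{Leb}\left(\left[s,t\right]\cap\left[u,v\right]\right)$ is itself a 2D control, so the covariance has finite $1$-variation; this is exactly the hypothesis the paper's limiting argument needs, and it is what makes the expected signature of the piecewise linear approximations converge to that of the (Stratonovich) lift. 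Second, by exploiting independence of Brownian increments you factor the expectation across the tensor product, $\mathbb{E}\left[\otimes_{k}e^{\Delta_{k}}\right]=\otimes_{k}\mathbb{E}\left[e^{\Delta_{k}}\right]$, which replaces the paper's combinatorial sum over pairings and the delicate diagonal estimates of Lemma \ref{lem:3 dia} by the elementary computation $\left(1+a_{m}+R_{m}\right)^{\otimes2^{m}}$ with $\binom{2^{m}}{j}\left(2^{-m}/2\right)^{j}\rightarrow\frac{1}{2^{j}j!}$ and an $O\left(2^{-ms}\right)$ bound on the terms containing $R_{m}$. Your bookkeeping there is sound (at level $N$ only finitely many $(s,r)$ occur, and the count $2^{m(s+r)}$ is beaten by the decay $2^{-2ms-mr}$). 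What this buys is a short, independent-increments proof that avoids Wick pairings across different subintervals entirely; what the paper's general machinery buys is that it works without independent increments, which is the whole point of Theorem \ref{thm:gen}. The only caveat is presentational: the claim that expectation distributes over tensor products of independent factors, and the level-by-level interchange of limit and projection, deserve one explicit line each, but neither hides a gap.
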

Equivalently, the projection to any basis of the form $e_{i_{1}}\otimes e_{i_{1}}\otimes\cdots\otimes e_{i_{n}}\otimes e_{i_{n}}$
is equal to $\frac{1}{2^{n}n!}$. 

Note that a term $e_{i_{1}}\otimes e_{i_{2}}\cdots\otimes e_{i_{2n}}$
would appear in the expansion of $\left(\sum_{i=1}^{d}e_{i}\otimes e_{i}\right)^{n}$
if and only if the pairing $\pi_{n}:=\left\{ \left(1,2\right),\left(3,4\right),\ldots,\left(2n-1,2n\right)\right\} $
is in $\Pi_{i_{1},...,i_{2n}}$. Thus to prove the continuity of the
expected signature as $H\rightarrow\frac{1}{2}$, it suffices to prove 
\begin{enumerate}
\item Let $\pi_{n}$ denote the pairing $\left\{ \left(1,2\right),\left(3,4\right),\ldots,\left(2n-1,2n\right)\right\} $.
Then the integral 
\[
I_{n}:=\left(H\left(2H-1\right)\right)^{n}\int_{\triangle_{2n}}\Pi_{\left(l,j\right)\in\pi_{n}}\left(u_{j}-u_{l}\right)^{2H-2}du_{1}\ldots du_{2n}
\]
converges to $\frac{1}{2^{n}n!}$ as $H\rightarrow\frac{1}{2}$. 
\item The integral
\[
\left(H\left(2H-1\right)\right)^{n}\int_{\triangle_{2n}}\Pi_{\left(l,j\right)\in\pi}\left(u_{j}-u_{l}\right)^{2H-2}du_{1}.\ldots du_{2n},
\]
converges to $0$ as $H\rightarrow\frac{1}{2}$ for any $\pi\in E_{2n}\backslash\pi_{n}$.
\end{enumerate}
We will calculate first the integral $I_{n}$, which we may write
explicitly as 
\[
\left(H\left(2H-1\right)\right)^{n}\int_{\triangle_{2n}}\Pi_{j=1}^{n}\left(u_{2j}-u_{2j-1}\right)^{2H-2}du_{1}\ldots du_{2n}
\]

By Fubini's theorem, and using the notation $u_{2n+1}=1$, we can
``integrate $u_{2n}$s first'' to obtain: 
\[
\begin{array}{cc}
 & \left(H\left(2H-1\right)\right)^{n}\int_{\triangle_{2n}}\Pi_{j=1}^{n}\left(u_{2j}-u_{2j-1}\right)^{2H-2}du_{1}\ldots du_{2n}\\
= & \left(H\left(2H-1\right)\right)^{n}\int_{0}^{1}\int_{0}^{u_{2n}}...\int_{0}^{u_{2}}\Pi_{j=1}^{n}\left(u_{2j}-u_{2j-1}\right)^{2H-2}du_{1}\ldots du_{2n}\\
= & \left(H\left(2H-1\right)\right)^{n}\int_{0}^{1}\int_{0}^{u_{2n-3}}...\int_{0}^{u_{3}}\Pi_{j=1}^{n}\left[\int_{u{}_{2j-1}}^{u{}_{2j+1}}\left(u_{2j}-u_{2j-1}\right)^{2H-2}du_{2j}\right]du_{1}du_{3}\ldots du_{2n-1}\\
= & H^{n}\int_{0}^{1}\int_{0}^{u_{2n-3}}...\int_{0}^{u_{3}}\Pi_{j=1}^{n}\left(u_{2j+1}-u_{2j-1}\right)^{2H-1}du_{1}du_{3}du_{5}\ldots du_{2n-1}.
\end{array}
\]

Taking limit as $H\rightarrow\frac{1}{2}$ and using the bounded convergence
theorem, we have 
\[
\begin{array}{cc}
 & \lim_{H\rightarrow\frac{1}{2}}H^{n}\int_{0}^{1}\int_{0}^{u_{2n-3}}...\int_{0}^{u_{3}}\Pi_{j=1}^{n}\left(u_{2j+1}-u_{2j-1}\right)^{2H-1}du_{1}du_{3}du_{5}\ldots du_{2n-1}\\
= & \left(\frac{1}{2}\right)^{n}\int_{0}^{1}\int_{0}^{u_{2n-3}}...\int_{0}^{u_{3}}du_{1}du_{3}du_{5}\ldots du_{2n-1}\\
= & \frac{1}{2^{n}n!}.
\end{array}
\]

Now we consider other pairings $\pi$ in $\Pi_{i_{1},...,i_{2n}}$,
that is when there exists a $k$,$i$ such that $k-i>1$ but$\left(i,k\right)\in\pi$.
We have

\begin{equation}
\begin{array}{cc}
 & \left(H\left(2H-1\right)\right)^{n}\int_{\triangle_{2n}}\Pi_{\left(l,j\right)\in\pi}\left(u_{j}-u_{l}\right)^{2H-2}du_{1}\ldots du_{2n}\\
\leq & \left(H\left(2H-1\right)\right)^{n-1}\int_{\triangle_{2n-2}}\Pi_{\left(l,j\right)\in\pi\backslash\left(i,k\right)}\left[\left(u_{j}-u_{l}\right)^{2H-2}du_{l}du_{j}\right]\\
 & \times\left(H\left(2H-1\right)\right)\int\left(u_{k}-u_{i}\right)^{2H-2}1_{\left[u_{i-1},u_{i+1}\right]\text{\ensuremath{\times}}\left[u_{k-1},u_{k+1}\right]}\left(u_{i},u_{k}\right)du_{i}du_{k}.
\end{array}\label{eq:hh}
\end{equation}

Note that as $k-1\geq i+1$, 
\begin{eqnarray*}
 &  & H\left(2H-1\right)\int_{\mathbb{R}^{2}}\left(u_{k}-u_{i}\right)^{2H-2}1_{\left[u_{i-1},u_{i+1}\right]\text{\ensuremath{\times}}\left[u_{k-1},u_{k+1}\right]}\left(u_{i},u_{k}\right)du_{i}du_{k}\\
 & \leq & H\left(2H-1\right)\int_{\mathbb{R}^{2}}\left(u_{k}-u_{i}\right)^{2H-2}1_{\left[0,u_{k-1}\right]\text{\ensuremath{\times}}\left[u_{k-1},1\right]}\left(u_{i},u_{k}\right)du_{i}du_{k}\\
 & = & \frac{1}{2}\left[1-u_{k-1}^{2H}-\left(1-u_{k-1}\right)^{2H}\right]\\
 & \leq & \frac{1}{2}\left(\frac{1}{2}-\left(\frac{1}{2}\right)^{2H}\right),
\end{eqnarray*}
where the final inequality holds because $0\leq1-x^{2H}-\left(1-x\right)^{2H}\leq1-\left(\frac{1}{2}\right)^{2H-1}$
for $H>\frac{1}{2}$. 

Thus by (\ref{eq:hh}), 
\begin{eqnarray*}
 &  & \left(H\left(2H-1\right)\right)^{n}\int_{\triangle_{2n}}\Pi_{\left(l,j\right)\in\pi}\left(u_{j}-u_{l}\right)^{2H-2}du_{1}\ldots du_{2n}\\
 & \leq & \frac{1}{2}\left(1-\left(\frac{1}{2}\right)^{2H-1}\right)\left(H\left(2H-1\right)\right)^{n-1}\int_{\triangle_{2n-2}}\Pi_{\left(l,j\right)\in\pi\backslash\left(i,k\right)}\left[\left(u_{j}-u_{l}\right)^{2H-2}du_{l}du_{j}\right].
\end{eqnarray*}

Note first that 
\begin{eqnarray*}
 &  & \left(H\left(2H-1\right)\right)^{n-1}\int_{\triangle_{2n-2}}\Pi_{\left(l,j\right)\in\pi\backslash\left(i,k\right)}\left[\left(u_{j}-u_{l}\right)^{2H-2}du_{l}du_{j}\right]\\
 & \leq & \left(H\left(2H-1\right)\right)^{n}\int_{\left[0,1\right]^{2n}}\Pi_{\left(l,j\right)\in\pi}\left|u_{j}-u_{l}\right|^{2H-2}du_{1}\ldots du_{2n}\\
 & = & \left(2H\left(2H-1\right)\right)^{n}\left(\int_{0}^{1}\int_{0}^{1}\left|y-x\right|^{2H-2}dxdy\right)^{n}\\
 & = & \left(2H\left(2H-1\right)\right)^{n}\left(\int_{0}^{1}\int_{0}^{y}\left|y-x\right|^{2H-2}dxdy\right)^{n}\\
 & = & 1,
\end{eqnarray*}

therefore,
\begin{eqnarray*}
 &  & \left(H\left(2H-1\right)\right)^{n}\int_{\triangle_{2n}}\Pi_{\left(l,j\right)\in\pi}\left(u_{j}-u_{l}\right)^{2H-2}du_{1}\ldots du_{2n}\\
 & \leq & \frac{1}{2}\left(1-\left(\frac{1}{2}\right)^{2H-1}\right)\\
 & \rightarrow & 0
\end{eqnarray*}

as $H\rightarrow\frac{1}{2}$.


\begin{thebibliography}{AMN01}
\bibitem[AMN01]{AMN01}E. Alos, O. Mazet and D. Nualart, Stochastic
calculus with respect to Gaussian processes. \textit{Annals of Probability}
29, 766\textendash{}801, 2001.

\bibitem[BC07]{Fabric}F. Baudoin and L. Coutin, Operators associated
with a stochastic differential equation driven by fractional Brownian
motions. \textit{Stochastic Process. Appl.} 117, 550\textendash{}574,
2007.

\bibitem[CQ00]{CQ00}L. Coutin and Z. Qian: Stochastic differential
equations for fractional Brownian motions. \textit{C. R.Acad. Sci.
Paris Ser. I Math.} 331, 75\textendash{}80, 2000.

\bibitem[Faw03]{Faw03}T. Fawcett, Problems in stochastic analysis:
connections between rough paths and noncommutative harmonic analysis.
DPhil thesis, University of Oxford, UK, 2003.

\bibitem[FR]{FR}P. Friz and S. Riedel, Convergence rates for the
full Gaussian rough paths, to appear in \textit{Annales de l\textquoteright{}Institut
Henri Poincaré} (B) Probability and Statistics.

\bibitem[FV10]{FV10}P. Friz and N. Victoir, \textit{Multidimensional
Stochastic Processes as Rough Paths. Theory and Applications}, Cambridge
Studies of Advanced Mathematics, Vol. 120, 2010.

\bibitem[LH11]{LH11}T. Lyons and N. Ni, \textit{Expected signature
of two dimensional Brownian Motion up to the first exit time of the
domain},. arXiv:1101.5902v2, 2011.

\bibitem[Lyn98]{Lyn98}T. Lyons, \emph{Differential equations driven
by rough signals}, \textit{Rev. Mat. Iberoamericana} \textbf{14} ,
no.~2, 215--310, 2998.

\bibitem[LV04]{LV04}T. Lyons, N. Victoir, Cubature on Wiener space,
\textit{Proc. R. Soc. Lond. Ser. A Math. Phys. Eng. Sci.} 460 169\textendash{}198,
2004.

\bibitem[Ni12]{Ni12}H. Ni, \textit{Expected signature of a stochastic
process}, Phd thesis Oxford, 2012.

\bibitem[NVV99]{NVV99}I. Norros,E. Valkeila and J. Virtamo, An elementary
approach to a Girzanov formula and other analytic results on fractional
Brownian motions, \textit{Bernoulli 5}, 571-587, 1999.

\bibitem[Wer12]{Wer12}B. Werness, Regularity of Schramm-Loewner evolutions,
annular crossings, and rough path theory. \textit{Electron. J. Probab.
}17,no. 81,1--21, 2012. \end{thebibliography}
\end{document}